\newtheorem*{problem}{Problem}
\numberwithin{equation}{section}
\newtheorem{theorem}{Theorem}[section]
\newaliascnt{lemma}{theorem}
\newtheorem{lemma}[lemma]{Lemma}
\newaliascnt{proposition}{theorem}
\newtheorem{proposition}[proposition]{Proposition}
\newaliascnt{corollary}{theorem}
\newtheorem{corollary}[corollary]{Corollary}
\newaliascnt{conjecture}{theorem}
\newtheorem{conjecture}[conjecture]{Conjecture}
\newcommand{\e}{\varepsilon}
\newcommand{\R}{{\mathbb R}}
\newcommand{\Rd}{{\R^d}}
\newcommand{\tr}{\operatorname{tr}}
\newcommand{\Z}{{\mathbb Z}}
\newcommand{\Zd}{{\Z^d}}
\newcommand{\mref}[1]{%
\href{http://www.ams.org/mathscinet-getitem?mr=#1}{#1}}
\newcommand{\arxiv}[1]{%
\href{http://front.math.ucdavis.edu/#1}{ArXiv:#1}}
\begin{document}

\title[]{Sums of Laplace eigenvalues --- rotations and tight frames in higher dimensions}

\author[]{R. S. Laugesen and B. A. Siudeja}
\address{Department of Mathematics, University of Illinois, Urbana,
IL 61801, U.S.A.} \email{Laugesen\@@illinois.edu}
\email{Siudeja\@@illinois.edu}
\date{\today}

\keywords{Isoperimetric, membrane, tight frame, polar dual.}
\subjclass[2000]{\text{Primary 35P15. Secondary 35J20,52A40}}

\begin{abstract}
The sum of the first $n \geq 1$ eigenvalues of the Laplacian is shown to be maximal among simplexes for the regular simplex (the regular tetrahedron, in three dimensions), maximal among parallelepipeds for the hypercube, and maximal among ellipsoids for the ball, provided the volume and moment of inertia of an ``inverse'' body are suitably normalized. This result holds for Dirichlet, Robin and Neumann eigenvalues. Additionally, the cubical torus is shown to be maximal among flat tori.

The method of proof involves tight frames for euclidean space generated by the orbits of the rotation group of the extremal domain.

The ball is conjectured to maximize sums of Neumann eigenvalues among general convex domains, provided the volume and moment of inertia of the polar dual of the domain are suitably normalized.
\end{abstract}

\maketitle

\vspace*{-12pt}

\section{\bf Introduction}

In this paper we show how the \emph{Method of Rotations and Tight Frames} developed in our earlier paper for plane domains \cite{LS11b} can be extended to yield higher dimensional bounds on sums of eigenvalues of the Laplacian. The transition to higher dimensions requires a broader algebraic context, in terms of irreducible symmetry groups. Furthermore, the results must be reformulated in a non-obvious way, because the obvious generalization fails in every dimension higher than two.

We will obtain sharp bounds on the sum of the first $n\ge 1$ eigenvalues of linear images of highly symmetric domains. This work generalizes P\'olya's results \cite{P52,PS53} in three ways. His proof could handle only the first Dirichlet eigenvalue of planar domains; our method bounds eigenvalue sums of arbitrary length, in arbitrary dimensions, and under any major boundary condition (Dirichlet, Neumann or Robin). For a detailed comparison of P\'olya's approach with our own, see the end of Section~6 in \cite{LS11b}.

Write $\lambda_1$, $\lambda_2$, \dots for the Dirichlet eigenvalues of the Laplacian on a domain in $\Rd$. Let $V$ be the volume and $I$ the second moment of mass around the centroid (which in two dimensions simply equals the moment of inertia about a perpendicular axis through the centroid). We show that for any linear transformation $T$ and any domain $D$ possessing sufficient rotational symmetry, the normalized eigenvalue sum
\begin{equation} \label{introeq}
  \left.(\lambda_1+\cdots+\lambda_n)V^{2/d}\right|_{T(D)} \left.\frac{V^{1+2/d}}{I}\right|_{T^{-1}(D)}
\end{equation}
is maximal when $T$ is the identity matrix. That is, the original domain $D$ is the maximizer.

Both factors in expression \eqref{introeq} are scale- and rotation-invariant. Hence maximality holds also when $T$ is any multiple of an orthogonal matrix.

Interestingly, the moment of mass normalization in \eqref{introeq} is imposed on the ``inverse domain'' $T^{-1}(D)$, rather than on the domain $T(D)$ where the eigenvalues are computed. This feature seems unavoidable in dimensions three and higher, as the rectangular box example reveals in \autoref{precresults}. In two dimensions one can normalize the moment of inertia on $T(D)$ \cite{LS11b}.

We obtain similar results for Neumann and Robin boundary conditions, and for flat tori. The Robin case requires a significant extension of the proof from the planar situation.

\smallskip
There are at least two good reasons for studying eigenvalue sums. The first is physical: the sum of eigenvalues represents the energy needed to fill the lowest $n$ quantum states under the Pauli exclusion principle. The second is mathematical: summability methods provide a tools for studying high eigenvalues, which are difficult to study individually. An example of the gains possible by  summation is that while P\'olya's conjecture (claiming the Weyl asymptotic is a lower bound for each Dirichlet eigenvalue) is still open, Li and Yau \cite{LY82} have shown that \emph{sums} of eigenvalues are indeed bounded below by the analogous Weyl asymptotic.

Notice our work in this paper is geometrically sharp, with an extremal domain existing for each fixed $n$. The Li--Yau inequality is not geometrically sharp, but is asymptotically sharp --- equality holds for each domain in the limit as $n \to \infty$. Asymptotically sharp upper bounds in the Neumann case were obtained by Kr\"oger \cite{Kr94}, and sums of functions of eigenvalues were studied recently by  Frank, Geisinger, Harrell, Hermi, Laptev, Loss and Weidl (\cite{DLL08,FLLS06,GLW10,HH08} and references therein).

Eigenvalue estimates for rotationally symmetric domains were previously studied by Ashbaugh and Benguria \cite{AB93}, while some higher dimensional upper bounds for the first Dirichlet eigenvalue were proved by Freitas and Krej{\v{c}}i{\v{r}}{\'{\i}}k \cite{FK08}. Their result applies to general domains, however it is usually weaker for the domains we can handle. Nevertheless, it is equivalent for the first Dirichlet eigenvalue of ellipsoids. Eigenvalues of certain simplexes have been found explicitly \cite{Ja08,KMV82,Tu84}, although the regular tetrahedron is not one of them because the trigonometric method of Lam\'{e} for the equilateral triangle fails to extend to higher dimensions.

For a detailed literature survey regarding eigenvalue estimates, one could begin with our two-dimensional paper \cite{LS11b} and Henrot's book \cite{He06}.

\section{\bf Assumptions and notation}\label{secassump}
\label{notation}

\subsection*{Eigenvalues}
For a bounded domain $D$ in $\Rd, d \geq 2$, we denote the Dirichlet eigenvalues of the Laplacian by $\lambda_j(D)$, the Robin eigenvalues by $\rho_j(D)$, and the Neumann eigenvalues by $\mu_j(D)$. In the Robin and Neumann cases, we assume the domain has Lipschitz boundary so that the spectrum is discrete. Denoting the eigenfunctions by $u_j$ in each case, we have
\[
\begin{cases}
-\Delta u_j = \lambda_j u_j \;\; \text{in $D$} \\
\hfill u_j = 0 \;\; \text{on $\partial D$}
\end{cases}
\;\;
\begin{cases}
-\Delta u_j = \rho_j u_j \;\; \text{in $D$} \\
\hfill \frac{\partial u_j}{\partial n}+\sigma u_j  = 0 \;\; \text{on $\partial D$}
\end{cases}
\;\;
\begin{cases}
-\Delta u_j = \mu_j u_j \;\; \text{in $D$} \\
\hfill \frac{\partial u_j}{\partial n}  = 0 \;\; \text{on $\partial D$}
\end{cases}
\]
and
\[
0 < \lambda_1 < \lambda_2 \leq \lambda_3 \leq \dots \quad
0 < \rho_1 < \rho_2 \leq \rho_3 \leq \dots \quad\;\;
0 = \mu_1 < \mu_2 \leq \mu_3 \leq \dots
\]

\subsection*{Geometric quantities}
Let
\begin{itemize}
  \item[] $V=$ volume,
  \item[] $I=$ second moment of mass about the centroid.
\end{itemize}
That is,
\[
  I(D) = \int_D |x-\overline{x}|^2 \, dx
\]
where the centroid is $\overline{x} = \int_D x \, dx/V$.

Note that $I$ can be interpreted as an average moment of inertia with respect to a randomly chosen axis (see for example \cite{FLL07}). It can also be viewed as the moment of inertia of $D$ in $\R^{d+1}$, with respect to the axis perpendicular to the hyperplane containing $D$ and passing through the centroid.

Given a matrix $M$, write its Hilbert--Schmidt norm as
\[
\lVert M \rVert_{HS}=\big( \sum_{j,k} M_{jk}^2 \big)^{\! 1/2} = (\tr M^\dagger M)^{1/2} ,
\]
where $M^\dagger$ denotes the transposed matrix.

\subsection*{Tight frames}
The leading role in our proofs is played by the concept of a tight frame:

\smallskip
A set of unit vectors $\{y_i\}_{i=1}^N$ in $\Rd$ forms a unit-norm \emph{tight frame} if for every vector $z \in \Rd$,
\begin{equation} \label{tightframeeq}
    \frac{1}{N}\sum_{i=1}^N |z \cdot y_i|^2 =\frac{1}{d} |z|^2.
\end{equation}

\smallskip
This property resembles the Plancherel identity for an orthonormal basis, except that the number of vectors $N$ can exceed the dimension $d$.

Tight frames have become an important tool in applied harmonic analysis, in recent years. The overdetermined representations they provide are useful for noise reduction and robustness to erasures, in signal processing. Note that the tight frames in this paper consist of equal-norm vectors; for more on that special case, see the work of Benedetto and Fickus \cite{BF03} and Casazza and Kova\v{c}evi\'{c} \cite{CK03}. General tight frames do not impose the equal-norm restriction. All tight frames arise as (rescaled) projections of orthonormal bases in higher dimensional spaces \cite[Chapter~5]{H07}. For this and more information about frame theory in finite and infinite dimensional spaces, one may consult the monographs of Christensen \cite{Ch03} and Han \emph{et al.}\ \cite{H07}.

\section{\bf Sharp upper bounds on eigenvalue sums}\label{precresults}

A \emph{symmetry of $D$} is an orthogonal matrix $U$ that maps $D$ to itself (with the matrix acting on the left by $x \mapsto Ux$). A group $\mathcal{U}$ of orthogonal matrices is \emph{irreducible} if every nontrivial orbit spans $\Rd$, that is, if $\{ Ux : U \in \mathcal{U} \}$ spans $\Rd$ for every $x \in \Rd \setminus \{ 0 \}$. Equivalently, the group is irreducible if the only subspaces of $\Rd$ that are invariant under the action of the group are $\Rd$ and the zero subspace.

\subsection*{Dirichlet and Neumann boundary conditions}
The next theorem shows how a linear transformation affects the eigenvalues of a domain having irreducible symmetry group.
\begin{theorem} \label{higherdimDN}
If the symmetry group of $D$ is irreducible, then
\[
(\lambda_1 + \dots + \lambda_n) \big|_{T(D)} \leq
\frac{1}{d} \lVert T^{-1} \rVert_{HS}^2 (\lambda_1 + \dots + \lambda_n) \big|_D
\]
for each $n \geq 1$ and each invertible linear transformation $T$ of $\Rd$. Equality holds if $T$ is a scalar multiple of an orthogonal matrix.

The same inequality holds for the Neumann eigenvalues.
\end{theorem}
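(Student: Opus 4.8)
The plan is to bound the eigenvalues of $T(D)$ from above by transplanting the eigenfunctions of $D$ and invoking the variational (Rayleigh--Ritz) characterization of the eigenvalue sum. Let $u_1,\dots,u_n$ be an $L^2(D)$-orthonormal system of Dirichlet eigenfunctions for the first $n$ eigenvalues of $D$, and define trial functions on $T(D)$ by $v_j = u_j \circ T^{-1}$. Each $v_j$ lies in $H^1_0(T(D))$ because $u_j$ vanishes on $\partial D$, and the $v_j$ are mutually orthogonal by the change of variables below, so
\[
(\lambda_1 + \dots + \lambda_n)\big|_{T(D)} \le \sum_{j=1}^n \frac{\int_{T(D)} \lvert \nabla v_j \rvert^2}{\int_{T(D)} \lvert v_j\rvert^2} .
\]

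Next I would reduce the right-hand side to an algebraic expression. Writing $A = (T^{-1})^\dagger$, the chain rule gives $\nabla v_j = A\,(\nabla u_j)\circ T^{-1}$, and the substitution $y = Tx$ (Jacobian $\lvert \det T\rvert$) converts both integrals to integrals over $D$; the factor $\lvert \det T\rvert$ cancels, leaving
\[
(\lambda_1 + \dots + \lambda_n)\big|_{T(D)} \le \sum_{j=1}^n \int_D \lvert A\nabla u_j\rvert^2 \, dx = \tr\!\big(A^\dagger A\, Q\big), \qquad Q := \sum_{j=1}^n \int_D \nabla u_j \otimes \nabla u_j\, dx .
\]
Here $Q$ is symmetric and positive semidefinite, with $\tr Q = \sum_{j} \int_D \lvert\nabla u_j\rvert^2 = \lambda_1 + \dots + \lambda_n$.

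The heart of the argument is to show $Q = \tfrac1d(\lambda_1 + \dots + \lambda_n)I$, which is exactly where irreducibility (and the tight-frame philosophy) enters. A short computation first shows $Q$ depends only on the span $V_n = \operatorname{span}\{u_1,\dots,u_n\}$, not on the chosen orthonormal basis. When $\lambda_n < \lambda_{n+1}$, the space $V_n$ is a sum of full eigenspaces, hence invariant under every symmetry $U$ of $D$; since $\{u_j\circ U\}$ is then another orthonormal basis of $V_n$ and $\nabla(u_j\circ U) = U^\dagger (\nabla u_j)\circ U$, the same change of variables gives $U^\dagger Q U = Q$, i.e.\ $QU = UQ$, for every symmetry $U$. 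Thus $Q$ commutes with the irreducible group $\mathcal U$. Being symmetric, $Q$ is diagonalizable over $\R$, and each of its eigenspaces is $\mathcal U$-invariant, hence equals $\{0\}$ or all of $\Rd$ by irreducibility; so $Q$ has a single eigenvalue and $Q = cI$, with $c = \tfrac1d(\lambda_1+\dots+\lambda_n)$ by taking traces. (Equivalently, the orbit of any gradient direction under $\mathcal U$ is a tight frame in the sense of \eqref{tightframeeq}.) Substituting back, $\tr(A^\dagger A\,Q) = \tfrac1d \lVert A\rVert_{HS}^2 (\lambda_1+\dots+\lambda_n) = \tfrac1d \lVert T^{-1}\rVert_{HS}^2(\lambda_1+\dots+\lambda_n)$, which is the claim.

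I expect the main obstacle to be spectral degeneracy at the cutoff, $\lambda_n = \lambda_{n+1}$, where $V_n$ need not be $\mathcal U$-invariant and the isotropy argument breaks down. I would resolve this by reducing to clean gaps: apply the inequality at the two good indices $a < n \le b$ bracketing the eigenvalue $\Lambda = \lambda_n$ (so $\lambda_a < \lambda_{a+1} = \dots = \lambda_b < \lambda_{b+1}$), subtract to bound $(\lambda_{a+1}+\dots+\lambda_b)\big|_{T(D)}$, and then use that the sum of the smallest $n-a$ of the sorted eigenvalues $\lambda_{a+1}\le\dots\le\lambda_b$ on $T(D)$ is at most $\tfrac{n-a}{b-a}$ of their total, which interpolates the bound to the index $n$. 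The Neumann case is identical, since the $v_j$ need only lie in $H^1(T(D))$ with no boundary condition to preserve, and the zeroth mode ($\mu_1=0$) contributes nothing to $Q$. Finally, if $T = cU_0$ is a scalar times an orthogonal matrix, then $\lVert T^{-1}\rVert_{HS}^2 = d/c^2$ while the eigenvalues of $T(D)$ are $c^{-2}$ times those of $D$, so the two sides coincide and equality holds.
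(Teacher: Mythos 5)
Your proof is correct, but it takes a genuinely different route from the paper's. The paper transplants eigenfunctions through \emph{every} symmetry --- trial functions $u_j \circ U \circ T^{-1}$ for each $U$ in the symmetry group $\mathcal{U}$ --- and then averages the resulting family of upper bounds over $\mathcal{U}$ with respect to Haar measure; the tight-frame identity of \autoref{higherdimtightframe} (Schur's lemma applied to the group average $\int_{\mathcal U} U Y Y^\dagger U^\dagger \, d\nu(U)$) collapses that average to $\tfrac1d \lVert T^{-1}\rVert_{HS}^2$ times the eigenvalue sum. You instead use the single trial family $u_j\circ T^{-1}$ and apply Schur's lemma directly to the gradient covariance matrix $Q=\sum_{j\le n}\int_D \nabla u_j\otimes\nabla u_j\,dx$: invariance of the spectral subspace $V_n$ under the group forces $Q$ to commute with the irreducible $\mathcal U$, hence $Q=\tfrac1d(\lambda_1+\cdots+\lambda_n)\,\mathrm{Id}$. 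The trade-off is degeneracy: your isotropy argument genuinely needs the gap $\lambda_n<\lambda_{n+1}$ (otherwise $V_n$, hence $Q$, need not be $\mathcal U$-invariant), and your bracketing patch --- writing $S_k$, $\Sigma_k$ for the partial sums on $T(D)$ and $D$, using $S_n-S_a\le\tfrac{n-a}{b-a}(S_b-S_a)$ together with the exact linearity of $\Sigma_k$ in $k$ across the degenerate cluster $\lambda_{a+1}=\cdots=\lambda_b$ --- is valid, but it is an extra step the paper never needs, since averaging a family of valid bounds over $U$ requires no invariance of the chosen span. What your version buys: no Haar integration (commutation with the group suffices, even for the infinite symmetry group of the ball), the algebraic content isolated in one matrix $Q$, and a transparent view of exactly where irreducibility enters. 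What the paper's version buys: uniformity in $n$ with no case analysis at spectral degeneracies, and an averaging device that carries over verbatim to the Robin boundary term (\autoref{thmrobin}) and the flat-torus result (\autoref{flattori}), where a subspace-invariance argument would be harder to run.
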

The theorem and its corollaries below are proved in \autoref{higherdimproofs}. The two dimensional version of the result was proved already in our previous paper  \cite[Theorem~3.1]{LS11b}. In two dimensions, the irreducibility hypothesis simply means $D$ has $N$-fold rotational symmetry for some $N \geq 3$. Further, in two dimensions we found necessary and sufficient conditions for equality for the fundamental tone ($n=1$). Here in \autoref{higherdimDN} we state a sufficient condition for equality, but we have not succeeded in finding necessary conditions.

Stretching a domain by the same factor in all directions causes the eigenvalues to scale monotonically. The next corollary obtains a monotonicity estimate when the domain is stretched by different factors in different directions. Such estimates are especially interesting in the Neumann case, where domain monotonicity is unavailable.
\begin{corollary}[Stretching and monotonicity] \label{higherstretching}
Suppose the symmetry group of $D$ is irreducible. Let $T$ be a diagonal matrix with entries $t_1,\ldots,t_d>0$, so that $T(D)$ stretches $D$ by a factor $t_i$ in the $i$th coordinate direction. Then
\[
( \lambda_1 + \dots + \lambda_n) \big( T(D) \big) \leq \frac{t_1^{-2}+\cdots+t_d^{-2}}{d} \, ( \lambda_1 + \dots + \lambda_n)(D), \qquad n \geq 1 .
\]
(Equality holds if $t_1 = \dots = t_d$.) This result holds also for the Neumann eigenvalues.

In particular, if $t_i > 1$ for each $i$, then the eigenvalue sums of the stretched domain are smaller than the eigenvalue sums of the original domain.
\end{corollary}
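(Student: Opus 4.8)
The plan is to obtain this corollary as an immediate specialization of \autoref{higherdimDN} to diagonal matrices, so no new analysis is required; the whole content is the evaluation of the Hilbert--Schmidt factor. First I would simply invoke the theorem with the given diagonal $T$, which is invertible since each $t_i > 0$. The theorem then yields
\[
(\lambda_1 + \dots + \lambda_n)\big(T(D)\big) \leq \frac{1}{d} \lVert T^{-1} \rVert_{HS}^2 \, (\lambda_1 + \dots + \lambda_n)(D),
\]
and it remains only to identify $\lVert T^{-1} \rVert_{HS}^2$ explicitly.

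The key step is the norm computation. Since $T$ is diagonal with entries $t_1,\ldots,t_d$, its inverse $T^{-1}$ is diagonal with entries $t_1^{-1},\ldots,t_d^{-1}$, so that all off-diagonal entries vanish and
\[
\lVert T^{-1} \rVert_{HS}^2 = \sum_{j,k} (T^{-1})_{jk}^2 = t_1^{-2} + \cdots + t_d^{-2}.
\]
Substituting this into the theorem's inequality gives exactly the asserted bound with prefactor $(t_1^{-2}+\cdots+t_d^{-2})/d$. The Neumann version follows verbatim, since \autoref{higherdimDN} applies equally to the Neumann eigenvalues.

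For the equality assertion, when $t_1 = \cdots = t_d = t$ the matrix $T = tI$ is a scalar multiple of the identity, hence a scalar multiple of an orthogonal matrix, so the sufficient condition for equality in \autoref{higherdimDN} applies and the inequality becomes an equality. Finally, the monotonicity ``in particular'' clause is a one-line consequence: if $t_i > 1$ for every $i$, then $t_i^{-2} < 1$ for every $i$, whence the average $(t_1^{-2}+\cdots+t_d^{-2})/d < 1$, forcing the eigenvalue sums of $T(D)$ to be strictly smaller than those of $D$. I do not anticipate any genuine obstacle here; the only points needing care are the routine verification that the Hilbert--Schmidt norm of a diagonal matrix reduces to the sum of squares of its diagonal entries, and the observation that a uniform stretch is a scalar multiple of an orthogonal matrix so that the theorem's equality case is available.
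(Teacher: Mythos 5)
Your proposal is correct and matches the paper's (implicit) argument exactly: the corollary is obtained by specializing \autoref{higherdimDN} to a diagonal matrix and noting that $\lVert T^{-1}\rVert_{HS}^2 = t_1^{-2}+\cdots+t_d^{-2}$, with equality via the scalar-multiple-of-orthogonal case. No gaps; your handling of the equality case and the monotonicity clause is the same routine specialization the paper intends.
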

The next corollary expresses our theorem in more geometric terms. We state it only for linear images of certain regular solids, for simplicity, although analogous results hold for any domain $D$ having an irreducible group of symmetries, such as Archimedean solids and demihypercubes.
\begin{corollary}[Simplexes, parallelepipeds, and ellipsoids] \label{higherregularDN}
Let $D$ be a regular simplex (respectively: hypercube, ball) in $\Rd$. Among all simplexes (respectively: parallelepipeds, ellipsoids) of the form $T(D)$, where $T$ is an invertible linear transformation, the quantity
\[
\left. (\lambda_1 + \dots + \lambda_n) V^{2/d} \right|_{T(D)} \left. \frac{V^{1+2/d}}{I} \right|_{T^{-\dagger}(D)}
\]
is maximal for the regular simplex (respectively: hypercube, ball), for each $n \geq 1$.

The same result holds for Neumann eigenvalues.
\end{corollary}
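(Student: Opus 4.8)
The plan is to reduce the corollary to a direct computation built on \autoref{higherdimDN}, by tracking how the volume $V$ and the moment $I$ behave under a linear map. First I would observe that every simplex (respectively parallelepiped, ellipsoid) is, after a harmless translation, a linear image $T(D)$ of the regular solid $D$ centered at its centroid, so the stated family is exactly $\{T(D): T \text{ invertible}\}$. Placing the centroid of $D$ at the origin, I record the two scaling laws $V(T(D)) = |\det T|\, V(D)$ and, writing $\mathcal I(D) = \int_D x x^\dagger\,dx$ for the inertia matrix,
\[
I(T(D)) = |\det T|\,\tr\!\big(T^\dagger T\, \mathcal I(D)\big),
\]
both obtained by the substitution $x \mapsto Tx$.

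The key structural step --- and the one I expect to carry the real content --- is to show that $\mathcal I(D)$ is a scalar multiple of the identity. Since each symmetry $U$ of $D$ fixes the origin and maps $D$ to itself, the change of variables $x \mapsto Ux$ gives $U\mathcal I(D) U^\dagger = \mathcal I(D)$, so $\mathcal I(D)$ commutes with the whole (irreducible) symmetry group. Being real symmetric, $\mathcal I(D)$ has an orthogonal eigenbasis, and each eigenspace is invariant under the group; irreducibility forces every nonzero eigenspace to be all of $\Rd$, so $\mathcal I(D)$ equals $\tfrac{I(D)}{d}$ times the identity (the constant being fixed by taking traces). Substituting this into the formula above yields the clean law
\[
I(T(D)) = |\det T|\,\frac{I(D)}{d}\,\lVert T \rVert_{HS}^2 .
\]
I would also note the elementary identity $\lVert T^{-\dagger} \rVert_{HS} = \lVert T^{-1} \rVert_{HS}$, since transposition leaves the Hilbert--Schmidt norm unchanged.

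Finally I would assemble the target product. Applying the scaling laws to $T(D)$ for the eigenvalue--volume factor and to $T^{-\dagger}(D)$ for the normalization factor gives
\[
\left.\frac{V^{1+2/d}}{I}\right|_{T^{-\dagger}(D)} = \frac{d\,|\det T|^{-2/d}\,V(D)^{1+2/d}}{I(D)\,\lVert T^{-1}\rVert_{HS}^2},
\]
while \autoref{higherdimDN} bounds $(\lambda_1+\dots+\lambda_n)|_{T(D)}$ by $\tfrac1d \lVert T^{-1}\rVert_{HS}^2 (\lambda_1+\dots+\lambda_n)|_D$ and $V(T(D))^{2/d} = |\det T|^{2/d} V(D)^{2/d}$. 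Multiplying these together, the factors $\lVert T^{-1}\rVert_{HS}^2$, $|\det T|^{\pm 2/d}$, and the dimensional constants $\tfrac1d$ and $d$ all cancel exactly, leaving the value of the same expression evaluated at $T$ equal to the identity, i.e.\ at $D$ itself. This proves maximality for the regular solid. The Neumann case is identical, since \autoref{higherdimDN} holds verbatim for Neumann eigenvalues. The main obstacle is really the Schur-type argument pinning down $\mathcal I(D)$ as scalar; once that is in hand, the bookkeeping is designed precisely so that the moment normalization on the inverse body $T^{-\dagger}(D)$ absorbs all the distortion introduced by $T$.
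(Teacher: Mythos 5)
Your argument is correct and, in substance, identical to the paper's: the eigenvalue bound from \autoref{higherdimDN}, plus the observation that the moment matrix $\int_D x x^\dagger \, dx$ is a scalar multiple of the identity (Schur's lemma) and the resulting change-of-variables formula for $I\big(T^{-\dagger}(D)\big)$, are precisely the content of the paper's \autoref{higherdimHSnorm}, which you have in effect re-derived inline; your final cancellation reproduces exactly the paper's combination of that theorem and that lemma, and your bookkeeping checks out.

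The one genuine omission is that you never justify the word ``irreducible.'' The corollary's hypothesis says only that $D$ is a regular simplex, hypercube, or ball; irreducibility of its symmetry group is what must be verified before either \autoref{higherdimDN} or your Schur-type argument for the moment matrix can be invoked, and supplying this verification is in fact the only content of the paper's own proof of the corollary (it is obvious for the ball, and for the regular simplex and hypercube the paper cites Davis's book on Coxeter groups). The check is not difficult --- for the hypercube, an invariant subspace containing a nonzero vector $w$ also contains $w - S_i w = 2 w_i e_i$ for each coordinate sign-flip $S_i$, hence contains some $e_i$, hence by coordinate permutations all of them; for the regular simplex the symmetry group acts as the standard $d$-dimensional representation of the symmetric group on $d+1$ letters, which is irreducible --- but some such argument or citation is needed to make your proof complete as stated.
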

Other maximizers are possible too, in some cases. In particular, each rectangular box centered at the origin is a maximizer when $n=1$ and $D$ is a hypercube, as we will show in an example below.

\subsection*{Understanding the geometric factors}
An unusual feature of the corollary is that the eigenvalues are computed on one domain, $T(D)$, while the scale-invariant geometric factor $V^{1+2/d}/I$ is computed on a different domain, $T^{-\dagger}(D)$. The Example below explains why it is natural to evaluate the eigenvalues and the geometric factor on different domains.

The geometric factor could be evaluated on $T^{-1}(D)$ (without the transpose), because the transpose changes neither the volume nor the second moment of mass of the domain (see \autoref{higherdimHSnorm} and note that $T^{-1}$ and its transpose have the same Hilbert--Schmidt norm). We have chosen to state the ``transposed'' version of the corollary because it leads to a conjecture for polar duals, in \autoref{open}.

In two dimensions, \autoref{higherregularDN} reduces to saying that $(\lambda_1 + \cdots + \lambda_n)A^3/I$ is maximal for the equilateral triangle among triangles (respectively: square among parallelograms, disk among ellipses), as one sees by taking $d=2$, $V=A$, and using an identity connecting the moments of inertia of $T(D)$ and $T^{-1}(D)$ in two dimensions \cite[Lemma~5.4]{LS11b}. This two dimensional version of \autoref{higherregularDN} appeared in our earlier paper \cite[Corollary~3.2]{LS11b}.

An obvious generalization of this two dimensional estimate would be to maximize $(\lambda_1 + \cdots + \lambda_n)V^{1+4/d}/I$, but this functional is unbounded even on the family of rectangular boxes in three dimensions, as the next example shows.

\subsubsection*{Example: Rectangular boxes}
Suppose $D$ is the unit cube centered at the origin in three dimensions, and let $T$ be a diagonal matrix with positive entries $t_1,t_2,t_3$; see \autoref{fig5}. Then $T(D)$ is a rectangular box with side lengths $t_1,t_2,t_3$, volume $t_1t_2t_3$ and first Dirichlet eigenvalue
\[
\lambda_1 = \pi^2(t_1^{-2}+t_2^{-2}+t_3^{-2}) .
\]
And $T^{-\dagger}(D)$ is a box with side lengths $t_1^{-1},t_2^{-1},t_3^{-1}$, volume $(t_1t_2t_3)^{-1}$ and second moment of mass
\[
(t_1^{-2}+t_2^{-2}+t_3^{-2})/12t_1t_2t_3 .
\]
    \newcommand{\pudlo}[7]{
    \draw (#1,0) rectangle +(#2,#3);
    \draw (#1,#3) -- ++(#4,#4) -- ++(#2,0) -- ++(0,-#3) -- ++(-#4,-#4);
    \draw (#1+#2+#4,#3+#4) -- (#1+#2,#3);
    \draw[dashed] (#1,0) -- ++(#4,#4) -- ++(0,#3);
    \draw[dashed] (#1+#4,#4) -- ++(#2,0);
    \draw (#1,#3/2) node [left=-2pt] {\tiny $#6$};
    \draw (#1+#2/2,0) node [below] {\tiny $#5$};
    \draw (#1+#2+#4/2,#4/2) node [below right=-3pt] {\tiny $#7$};
    }
    \begin{figure}[h]
      \begin{center}
  \begin{tikzpicture}[scale=0.9]
    \pudlo{5}{1}{1}{1/3}{1}{1}{1}
    \draw[<-] (2.5,1) .. controls (3.5,1.3) .. (4.5,1) node [above,pos=0.5] {\tiny $T$};
    \pudlo{0}{1.5}{1}{1/4.5}{t_1}{t_2}{t_3}
      \draw (1,1.5) node { $T(D)$};
      \draw (5.7,1.6) node {$D$};
    \begin{scope}[draw=red]
      \pudlo{10}{1/1.5}{1}{1/2}{t_1^{-1}}{t_2^{-1}}{t_3^{-1}}
      \draw[->] (7,1) .. controls (8,1.3) .. (9,1) node [above,pos=0.5,red] {\tiny $T^{-\dagger}$};
      \draw (10.5,1.8) node { $T^{-\dagger}(D)$};
    \end{scope}
    \draw (-1,-0.3) node [below right] {};
    \draw (12.5,-0.5) node [below left] {\small $I=(t_1^{-2}+t_2^{-2}+t_3^{-2} )V/12$};
    \draw (-1,-0.5) node [below right] {\small $I=(t_1^2+t_2^2+t_3^2)V/12$};
  \end{tikzpicture}
      \end{center}
      \caption{The first eigenvalue $\lambda_1=\pi^2 (t_1^{-2}+t_2^{-2}+t_3^{-2})$ of $T(D)$ is compatible with the second moment of mass of $T^{-\dagger}(D)$, not the moment of $T(D)$.}
      \label{fig5}
    \end{figure}
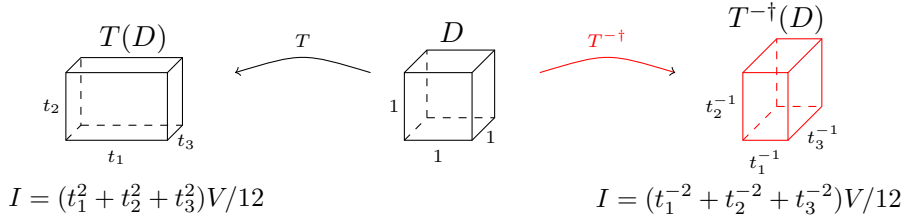
The sum of reciprocal side length squares appears in both these last two displayed formulas. These factors cancel to give
\[
\left. \lambda_1 V^{2/3} \right|_{T(D)} \cdot \left. \frac{V^{1+2/3}}{I} \right|_{T^{-\dagger}(D)}
= 12 \pi^2 ,
\]
regardless of the side lengths $t_1,t_2,t_3$. Thus:
\begin{quote}
every rectangular box maximizes the first Dirichlet eigenvalue ($n=1$) in \autoref{higherregularDN}.
\end{quote}

Now choose $t_1=t_2=1$ and $t_3=\e$, so that the box $T(D)$ is a cube that has been squashed in one direction. We calculate
\[
\left. \lambda_1 \frac{V^{1+4/3}}{I} \right|_{T(D)} = 12 \pi^2 \frac{(2+\e^{-2}) \e^{4/3}}{2+\e^2} ,
\]
which tends to $\infty$ as $\e \to 0$. Hence normalizing the first eigenvalue by the volume and moment solely of $T(D)$ fails to yield a bounded quantity, even on the class of rectangular boxes.

The preceding example helps explain why the second moment of mass should be evaluated on the ``inverse'' of the domain on which the eigenvalues are evaluated. Or to put it heuristically, the eigenvalue $\lambda_1$ is dominated by contributions from the ``short'' directions of $T(D)$. These short directions contribute little to the moment of mass of $T(D)$ but contribute a lot to the moment of mass of $T^{-\dagger}(D)$, since they correspond to ``long'' directions in that domain .

\subsection*{Robin boundary condition}
\begin{theorem}\label{thmrobin}
If the symmetry group of $D$ is irreducible and $T$ is an invertible linear transformation of $\Rd$, then
\[
(\rho_1 + \dots + \rho_n) \big|_{\sigma\lVert T^{-1} \rVert_{HS}/\sqrt{d}, \, T(D)} \leq \frac{1}{d} \lVert T^{-1} \rVert_{HS}^2 (\rho_1 + \dots + \rho_n) \big|_{\sigma,D}
\]
for each $n \geq 1$. Equality holds if $T$ is a scalar multiple of an orthogonal matrix.
\end{theorem}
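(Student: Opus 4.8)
The plan is to run the same trial-function argument that proves \autoref{higherdimDN}, but to track carefully the extra boundary term in the Robin quadratic form
\[
Q_\sigma^D(u) = \int_D |\nabla u|^2\,dx + \sigma\int_{\partial D} u^2\,dS ,
\]
whose sum over an $L^2(D)$-orthonormal trial system controls $\rho_1+\dots+\rho_n$ through the Rayleigh--Ritz (Ky Fan) principle; no boundary condition is imposed on the trial functions, since the Robin condition is natural. Let $u_1,\dots,u_n$ be the Robin-$\sigma$ eigenfunctions on $D$, orthonormal in $L^2(D)$. For each symmetry $U$ of $D$ I would exploit the identity $T(D)=(TU^{-1})(D)$ and take as trial functions on $T(D)$ the compositions $v_j^U(x)=u_j(UT^{-1}x)$; after dividing by $|\det T|^{1/2}$ these form an orthonormal system in $L^2(T(D))$, so Rayleigh--Ritz yields, for \emph{each} $U$, an upper bound for $(\rho_1+\dots+\rho_n)\big|_{\sigma',T(D)}$ in terms of $Q_{\sigma'}^{T(D)}$ evaluated on the $v_j^U$.

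The gradient part is handled exactly as before: a change of variables turns $\int_{T(D)}|\nabla v_j^U|^2$ into $\int_D |(T^{-1})^\dagger U^\dagger \nabla u_j|^2$, and averaging over the symmetry group together with the tight-frame identity \eqref{tightframeeq} --- equivalently, Schur's lemma giving $\frac{1}{|\mathcal U|}\sum_U U^\dagger T^{-1}(T^{-1})^\dagger U = \frac{1}{d}\lVert T^{-1}\rVert_{HS}^2\,\mathrm{Id}$ (a Haar integral replacing the sum if $\mathcal U$ is infinite) --- collapses it to $\frac{1}{d}\lVert T^{-1}\rVert_{HS}^2\int_D|\nabla u_j|^2$. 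The genuinely new ingredient is the boundary term. Under the linear map $TU^{-1}$ the $(d-1)$-dimensional surface measure transforms by the Jacobian factor $|\det T|\,|(T^{-1})^\dagger U^{-1} n(z)|$, where $n(z)$ is the unit outer normal to $\partial D$ at $z$, so after normalization the boundary contribution of $v_j^U$ equals $\sigma'\int_{\partial D} u_j^2\,|(T^{-1})^\dagger U^{-1} n|\,dS$.

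Here is the crux, and the step I expect to be the main obstacle. The Schur/tight-frame identity produces a clean constant only for the \emph{square} $|(T^{-1})^\dagger U^{-1} n|^2$ averaged over the group, whereas the surface Jacobian contributes the \emph{first} power $|(T^{-1})^\dagger U^{-1} n|$. I would bridge this gap with Cauchy--Schwarz (the power-mean inequality): averaging over $U$, and using that $U^{-1}$ ranges over $\mathcal U$ as $U$ does,
\[
\frac{1}{|\mathcal U|}\sum_U |(T^{-1})^\dagger U^{-1} n| \le \Big(\frac{1}{|\mathcal U|}\sum_U |(T^{-1})^\dagger U^{-1} n|^2\Big)^{1/2} = \frac{\lVert T^{-1}\rVert_{HS}}{\sqrt d} ,
\]
the last equality again from Schur together with $|n|=1$. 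This loses a factor, but precisely in the way the modified parameter $\sigma'=\sigma\lVert T^{-1}\rVert_{HS}/\sqrt d$ is engineered to absorb: the averaged boundary contribution is at most $\sigma'\,\frac{\lVert T^{-1}\rVert_{HS}}{\sqrt d}\int_{\partial D}u_j^2 = \frac{1}{d}\lVert T^{-1}\rVert_{HS}^2\,\sigma\int_{\partial D}u_j^2$, matching the coefficient already obtained for the gradient term. Averaging the per-$U$ Rayleigh--Ritz bounds over $\mathcal U$ (the left-hand side is independent of $U$) and summing over $j$ then gives exactly $\frac{1}{d}\lVert T^{-1}\rVert_{HS}^2(\rho_1+\dots+\rho_n)\big|_{\sigma,D}$.

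For equality with $T=cO$, $O$ orthogonal, one checks that $|(T^{-1})^\dagger U^{-1} n|\equiv c^{-1}$ is constant, so the Cauchy--Schwarz step is an equality, $\sigma'=\sigma/c$, and both sides reduce to the scaling law for the Robin spectrum. The one hypothesis to flag is the sign of the parameter: the Cauchy--Schwarz bound on the boundary term preserves the inequality only when $\sigma'\ge 0$, so the argument as outlined applies for $\sigma\ge 0$, and the delicate negative-$\sigma$ regime would require separate treatment.
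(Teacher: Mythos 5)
Your proof is correct and follows essentially the same route as the paper: transplanted trial functions $u_j\circ U\circ T^{-1}$, group-averaging with the tight-frame (Schur) identity for the gradient term, and Cauchy--Schwarz to pass from the averaged \emph{square} of the boundary Jacobian $|T^{-\dagger}U^{-1}n|$ (which equals $\lVert T^{-1}\rVert_{HS}^2/d$ by the frame identity applied to $z=n^\dagger$, $Y=T^{-1}$) to its first power --- exactly the loss that the modified parameter $\sigma\lVert T^{-1}\rVert_{HS}/\sqrt{d}$ is engineered to absorb. The only differences are presentational: you invoke the standard surface-measure transformation law $dS\mapsto |\det A|\,|A^{-\dagger}n|\,dS$ for general $T$, whereas the paper first reduces to diagonal $T$ by singular value decomposition and rebuilds that Jacobian from its determinant identities $S[W,w]$; and your caveat that $\sigma\geq 0$ is needed for the Cauchy--Schwarz step to preserve the inequality is consistent with the paper's standing assumption of a positive Robin parameter.
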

The subscript $\sigma,D$ indicates both the Robin constant $\sigma$ and the domain $D$. Incidentally, the Robin parameter on $T(D)$ in the theorem is multiplied by $\lVert T^{-1} \rVert_{HS}$ to ensure that the eigenvalues scale correctly with respect to $T$.

A simpler and more ``geometric'' estimate on the Robin eigenvalues can be deduced by fixing the volume of the domain, as in the next corollary.
\begin{corollary}\label{corrobin}
If the symmetry group of $D$ is irreducible and $T$ is an invertible linear transformation with $|\det T|=1$, then for each $n \geq 1$,
\[
\left. (\rho_1 + \dots + \rho_n) V^{2/d} \right|_{\sigma,\, T(D)} \left. \frac{V^{1+2/d}}{I} \right|_{T^{-\dagger}(D)}
\]
is maximal when $T$ is an orthogonal matrix.
\end{corollary}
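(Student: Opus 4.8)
\medskip

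The plan is to peel off the geometric factors and reduce everything to \autoref{thmrobin}, the one genuinely new ingredient being monotonicity of the Robin spectrum in the boundary parameter. First I would place the centroid of $D$ at the origin (irreducibility forces it there) and observe that the second moment tensor $M_{ij}=\int_D x_i x_j\,dx$ satisfies $UMU^\dagger = M$ for every symmetry $U$ of $D$, so that by Schur's lemma and irreducibility $M = (I(D)/d)\,\mathrm{Id}$. This yields, for any invertible $S$,
\[
I(S(D)) = |\det S|\,\frac{I(D)}{d}\,\lVert S \rVert_{HS}^2 ,
\]
which is the moment identity underlying \autoref{higherdimHSnorm}. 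Taking $S=T^{-\dagger}$, using $\lVert T^{-\dagger}\rVert_{HS}=\lVert T^{-1}\rVert_{HS}$, and invoking $|\det T|=1$ so that $T(D)$, $T^{-\dagger}(D)$ and $D$ share the volume $V(D)$, the geometric factor collapses:
\[
\left. V^{2/d}\right|_{T(D)}\left.\frac{V^{1+2/d}}{I}\right|_{T^{-\dagger}(D)} = \frac{d\,V(D)^{1+4/d}}{I(D)\,\lVert T^{-1}\rVert_{HS}^2}.
\]
Hence the functional in the corollary equals a fixed positive constant (depending only on $D$) times $(\rho_1+\dots+\rho_n)|_{\sigma,T(D)}\big/\lVert T^{-1}\rVert_{HS}^2$, and the claim reduces to the inequality
\[
(\rho_1+\dots+\rho_n)\big|_{\sigma,T(D)} \le \tfrac{1}{d}\lVert T^{-1}\rVert_{HS}^2\,(\rho_1+\dots+\rho_n)\big|_{\sigma,D}.
\]

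This is almost \autoref{thmrobin}, except that the left side there carries the \emph{rescaled} Robin constant $\sigma\lVert T^{-1}\rVert_{HS}/\sqrt d$ rather than $\sigma$ itself. To bridge the gap I would combine two observations. Since the singular values $s_1,\dots,s_d$ of $T$ satisfy $\prod_i s_i = |\det T| = 1$, the AM--GM inequality gives $\lVert T^{-1}\rVert_{HS}^2 = \sum_i s_i^{-2} \ge d$, so $\sigma\lVert T^{-1}\rVert_{HS}/\sqrt d \ge \sigma$. Moreover the Robin eigenvalues are nondecreasing in $\sigma>0$: in the min--max formula with Rayleigh quotient $\big(\int_D|\nabla u|^2 + \sigma\int_{\partial D}u^2\big)\big/\int_D u^2$, increasing $\sigma$ increases the quotient for every trial function, hence increases each eigenvalue and their sum. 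Therefore
\[
(\rho_1+\dots+\rho_n)\big|_{\sigma,T(D)} \le (\rho_1+\dots+\rho_n)\big|_{\sigma\lVert T^{-1}\rVert_{HS}/\sqrt d,\,T(D)} \le \tfrac{1}{d}\lVert T^{-1}\rVert_{HS}^2\,(\rho_1+\dots+\rho_n)\big|_{\sigma,D},
\]
where the final step is exactly \autoref{thmrobin}. Since the right side is the value of the functional when $T$ is orthogonal (there $\lVert T^{-1}\rVert_{HS}^2=d$ and $T(D)$ is a rotation of $D$), maximality at orthogonal $T$ follows.

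The crux --- and the only place the proof differs essentially from the Dirichlet case --- is the Robin-parameter mismatch: \autoref{thmrobin} bounds the sum for the $T$-dependent constant $\sigma\lVert T^{-1}\rVert_{HS}/\sqrt d$, whereas the corollary demands the fixed constant $\sigma$ on $T(D)$. The role of the hypothesis $|\det T|=1$ is precisely to force $\lVert T^{-1}\rVert_{HS}/\sqrt d \ge 1$, so that spectral monotonicity moves the estimate in the favorable direction; without it (say $T=cI$ with $c>1$) the rescaled constant would fall below $\sigma$ and the chain would reverse. Finally, equality holds throughout when $T$ is orthogonal, since then $\lVert T^{-1}\rVert_{HS}/\sqrt d = 1$ trivializes the monotonicity step and $T$ is a scalar multiple of an orthogonal matrix, the equality case of \autoref{thmrobin}.
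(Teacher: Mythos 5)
Your proposal is correct and follows essentially the same route as the paper's proof: the AM--GM (quadratic--geometric mean) bound $\lVert T^{-1}\rVert_{HS}/\sqrt{d}\geq 1$ from $|\det T|=1$, monotonicity of the Robin eigenvalues in the boundary parameter to pass from $\sigma$ to $\sigma\lVert T^{-1}\rVert_{HS}/\sqrt{d}$, then \autoref{thmrobin}, and finally the moment identity of \autoref{higherdimHSnorm} (which you re-derive inline via Schur's lemma rather than cite) to convert $\lVert T^{-1}\rVert_{HS}^2$ into the geometric factors.
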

Notice the Robin parameter in this corollary is independent of the transformation $T$.

\subsection*{Flat torus}
Our final result concerns the eigenvalues of a flat torus. Suppose $T$ is an invertible linear transformation on $\Rd$, so that $T\Zd$ is a lattice and $\Rd/T\Zd$ is a flat torus.  Denote the eigenvalues of the Laplacian on this torus by $\tau_j$, so that
\[
0 = \tau_1 < \tau_2 \leq \tau_3 \leq \dots
\]
and the eigenfunctions satisfy $-\Delta u_j = \tau_j u_j$ on the torus. Some extremal results on those eigenvalues were obtained by Nadirashvili \emph{et al.}\ \cite{JLNNP05,N96}. These eigenvalues are given explicitly as $4\pi^2$ times the squares of lengths of vectors in the dual lattice  $T^{-\dagger} \Zd$, because each vector $y$ in the dual lattice generates an eigenfunction $e^{2\pi i y \cdot x}$.
\begin{proposition} \label{flattori}
Among all flat tori $\Rd/T\Zd$, where $T$ is an invertible linear transformation, for each $n \geq 2$ the normalized eigenvalue sum
\[
(\tau_2 + \dots + \tau_n) \frac{1}{\lVert T^{-\dagger} \rVert_{HS}^2}
\]
is maximal when $T$ is a scalar multiple of an orthogonal matrix (that is, when the torus is cubical).
\end{proposition}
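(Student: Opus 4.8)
The plan is to pass to the dual lattice and reduce the whole statement to a single inequality for positive‑definite matrices, which I then attack by averaging over the symmetry group of the cube. Writing $R=T^{-\dagger}$, the nonzero torus eigenvalues are $4\pi^2|Rk|^2=4\pi^2\,k^\dagger Gk$ for $k\in\Z^d\setminus\{0\}$, where $G:=R^\dagger R$ is the (positive definite) Gram matrix, while the normalizing factor is $\lVert T^{-\dagger}\rVert_{HS}^2=\lVert R\rVert_{HS}^2=\tr G$. Let $K\subset\Z^d\setminus\{0\}$ be a set of $n-1$ integer vectors realizing the $n-1$ smallest values of $k^\dagger Gk$, so that $\tau_2+\dots+\tau_n=4\pi^2\sum_{k\in K}k^\dagger Gk$ (ties and the $\pm$ multiplicity do not affect this sum). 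The claim then becomes
\[
\sum_{k\in K}k^\dagger Gk\ \le\ \frac{N_0}{d}\,\tr G,
\]
where $N_0:=\sum_{k\in K_0}|k|^2$ is the corresponding quantity for the cube lattice $\Z^d$ (with $K_0$ a set of $n-1$ shortest nonzero integer vectors and $\tr I=d$). Scale‑ and rotation‑invariance of the functional — exactly as noted earlier for the Dirichlet functional — then forces equality for every scalar multiple of an orthogonal matrix, i.e. for the cubical torus.

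Next I would set $M_K:=\sum_{k\in K}kk^\dagger$, so that $\sum_{k\in K}k^\dagger Gk=\tr(GM_K)$, and use the defining minimality of $K$: for any other $(n-1)$-element subset $K'\subset\Z^d\setminus\{0\}$ one has $\tr(GM_K)\le\tr(GM_{K'})$, since $K$ picks out the smallest $n-1$ of the numbers $k^\dagger Gk$. The crucial move is to apply this with $K'=wK_0$, where $w$ runs over the hyperoctahedral group $W$ of signed permutation matrices — the (irreducible) symmetry group of the cube, which maps $\Z^d\setminus\{0\}$ bijectively to itself. Since $M_{wK_0}=wM_{K_0}w^\dagger$, averaging the inequalities $\tr(GM_K)\le\tr(GM_{wK_0})$ over $w\in W$ gives
\[
\tr(GM_K)\ \le\ \tr\!\Big(G\,\overline{M}\Big),\qquad \overline{M}:=\frac{1}{|W|}\sum_{w\in W}wM_{K_0}w^\dagger .
\]
Finally I would invoke irreducibility: $\overline{M}$ commutes with every element of $W$, so by Schur's lemma (the same mechanism behind the tight‑frame identity $\tfrac1N\sum_i|z\cdot y_i|^2=\tfrac1d|z|^2$) it is a scalar, $\overline{M}=(\tr M_{K_0}/d)\,I=(N_0/d)I$, because the trace is conjugation‑invariant and $\tr M_{K_0}=\sum_{k\in K_0}|k|^2=N_0$. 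Substituting yields $\tr(GM_K)\le (N_0/d)\tr G$, the desired bound, with equality when $G$ is a scalar matrix.

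The main obstacle, and the reason the averaging is indispensable, is that the selected set $K$ depends on $G$ (hence on $T$): one cannot naively compare $\sum_{k\in K}k^\dagger Gk$ against the cube quantity, because the shortest vectors of $R\Z^d$ need not be the $R$-images of the shortest vectors of $\Z^d$. Routing the comparison through the entire $W$-orbit of the fixed cube‑optimal set $K_0$ — and exploiting minimality of $K$ against all rotated competitors simultaneously — eliminates this dependence and collapses the problem to a representation‑theoretic average. The only remaining points to check carefully are that $W$ acts absolutely irreducibly on $\Rd$ (so that Schur genuinely forces a scalar over $\R$), and the routine bookkeeping of multiplicities and ties in the definition of $K$, which leave $\sum_{k\in K}k^\dagger Gk$ unchanged.
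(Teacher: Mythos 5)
Your proposal is correct and is essentially the paper's own argument in different notation: comparing the minimizing set $K$ against the $W$-orbit of the cube-optimal set $K_0$ and averaging is exactly the paper's step of bounding $\tau_1+\dots+\tau_n$ by $4\pi^2\sum_i |T^{-\dagger}Uv_i|^2$ for each cube symmetry $U$ and then averaging over the group, and your Schur-lemma evaluation of $\overline{M}=(N_0/d)\,\mathrm{Id}$ is precisely the mechanism behind the tight-frame identity (\autoref{higherdimtightframe}) that the paper invokes at that point. The caveat you flag about absolute irreducibility is harmless, since $\overline{M}$ is real symmetric and the eigenvector-orbit argument in the paper's proof of that lemma applies verbatim.
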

The Hilbert--Schmidt norm in this corollary measures the ``size'' of the fundamental parallelepiped $T^{-\dagger} \big( [0,1]^d \big)$ of the dual lattice $T^{-\dagger} \Zd$, since the norm is computed from the sum of the squares of the lengths of the column vectors of $T^{-\dagger}$. Thus the corollary can be interpreted as maximizing a ratio of sums of squares of lengths of vectors in the dual lattice: the numerator is the sum of squares of the lengths of the $n$ shortest vectors in that lattice, while the denominator is the sum of squares of the lengths of $d$ edges of the fundamental parallelepiped. To obtain the best possible bound, for a given lattice, we should choose $T$ so that the chosen fundamental parallelepiped has minimal sum of squares of spanning vector lengths.

\section{\bf Open problem --- general convex domains} \label{open}
We want to extend our eigenvalue bounds to general convex domains, generalizing from the class of linear images of rotationally symmetric domains.

The \emph{polar dual} of a bounded convex domain $\Omega$ in $\Rd$ containing the origin is the bounded convex domain
\[
\Omega^\circ = \{ x \in \Rd : x \cdot y < 1 \text{\ for all\ } y \in \overline{\Omega} \} .
\]
The polar dual of a ball of radius $r$ centered at the origin is a ball of radius $1/r$. The polar dual of a square is a square rotated by $\pi/4$. The dual of a regular polygon is a rotated regular polygon. However, in three dimensions the polar dual of a cube centered at the origin is an octahedron. Hence the polar dual need not preserve the shape of a regular solid.

If $T$ is an invertible linear transformation then the domain $T(\Omega)$ has polar dual $T^{-\dagger}(\Omega^\circ)$. For more on polar duals, see \cite[Sec.~2.8]{W94}.

%
%
%
%

These examples suggest that taking a \emph{polar dual} is similar to passing from $T(D)$ to $T^{-\dagger}(D)$. Therefore it seems reasonable to try maximizing the quantity
\begin{equation} \label{convexdual}
\left. (\lambda_1 + \dots + \lambda_n) V^{2/d} \right|_\Omega \left. \frac{V^{1+2/d}}{I} \right|_{\Omega^\circ} .
\end{equation}
This quantity is certainly bounded above, as follows. An ellipsoid $E$ can be constructed inside $\Omega$ such that a rescaled ellipsoid $d \, E$ contains $\Omega$ (John's ellipsoid \cite[Chapter 4.5 Section 2]{GW93}). Then $(dE)^\circ \subset \Omega^\circ \subset E^\circ$. Therefore quantity \eqref{convexdual} can be estimated above and below, using domain monotonicity, by a constant times the same quantity for $E$. And \eqref{convexdual} for the ellipsoid $E$ is bounded above by the value for the ball, by \autoref{higherregularDN}. Thus the expression \eqref{convexdual} is bounded above on the class of convex domains containing the origin.
\begin{problem}
Find convex domains containing the origin that maximize \eqref{convexdual}, especially for the fundamental tone ($n=1$).
\end{problem}

For Neumann eigenvalues, boundedness of the counterpart of \eqref{convexdual} follows from the relation $\mu_j \leq \lambda_j$ between the Neumann and Dirichlet eigenvalues. As to a maximizer, we suggest:
\begin{conjecture} \label{higherDNconj}
Suppose $\Omega \subset \Rd$ is a bounded convex domain containing the origin. Then the scale-invariant Neumann eigenvalue sum
\[
\left. (\mu_2 + \dots + \mu_n) V^{2/d} \right|_\Omega \left. \frac{V^{1+2/d}}{I} \right|_{\Omega^\circ}
\]
is maximal when $\Omega$ is a ball, for each $n \geq 2$.
\end{conjecture}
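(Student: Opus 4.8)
The plan is to bound the Neumann sum on $\Omega$ from above by transplanting the Neumann eigenfunctions of the ball through an optimal transport map, and then to recognize the resulting energy as a polar-dual quantity. Write $B$ for the unit ball, and use scale invariance to normalize $\Omega$ so that $V(\Omega)=V(B)$. Let $u_1\equiv\text{const},u_2,u_3,\dots$ be the $L^2(B)$-normalized Neumann eigenfunctions of $B$, so that $(\mu_2+\dots+\mu_n)(B)=\int_B \tr M\,dy$ where $M=\sum_{j=2}^n \nabla u_j\otimes\nabla u_j$; when $\Omega=B$ the functional attains the value on the right of the claimed inequality, and this is the target we must not exceed. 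I would first note that the crude reduction via John's ellipsoid used for boundedness in the text yields only a dimensional constant, not the sharp ball value, which is why a transport argument seems unavoidable.

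Next I would invoke the Ky Fan variational principle: for any $L^2(\Omega)$-orthonormal family whose first member is constant, $(\mu_2+\dots+\mu_n)(\Omega)\le\sum_{j=2}^n\int_\Omega|\nabla v_j|^2$. To build the $v_j$, let $\nabla\psi\colon\Omega\to B$ be the Brenier map ($\psi$ convex) pushing the uniform measure on $\Omega$ forward to the uniform measure on $B$, and set $v_j=u_j\circ\nabla\psi$. Measure preservation gives $\int_\Omega v_iv_j\,dx=\frac{V(\Omega)}{V(B)}\delta_{ij}$, so after rescaling the $v_j$ are orthonormal, and the Monge--Amp\`ere equation forces $\det D^2\psi\equiv V(B)/V(\Omega)=1$. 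Writing $A(y):=D^2\psi^\ast(y)=[D^2\psi(x)]^{-1}$ at $x=\nabla\psi^\ast(y)$, the symmetry of $D^2\psi$ gives $\nabla v_j=D^2\psi\,(\nabla u_j\circ\nabla\psi)$, and the change of variables $y=\nabla\psi(x)$ (Jacobian $1$) turns the Ky Fan bound into
\[
(\mu_2+\dots+\mu_n)(\Omega)\le\int_B \tr\!\big(A(y)^{-2}M(y)\big)\,dy .
\]

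It remains to compare this with $\int_B\tr M\,dy$. Since the eigenfunctions of $B$ organize into $O(d)$-multiplets, for $n$ completing full degenerate shells $M$ is $O(d)$-equivariant, of the form $a(|y|)\,\text{Id}+b(|y|)\,\widehat y\otimes\widehat y$, and the integrand reduces essentially to a radial weight times $\tr(A^{-2})=\lVert A^{-1}\rVert_{HS}^2$. The decisive step is then the sharp inequality
\[
\int_B \tr\!\big(A^{-2}M\big)\,dy\le \frac{V(B)^{1+2/d}}{I(B)}\,\frac{I(\Omega^\circ)}{V(\Omega^\circ)^{1+2/d}}\int_B\tr M\,dy ,
\]
which is precisely the conjectured inequality after reinserting the scale factor $V^{2/d}|_\Omega$. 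I would check it is an equality for ellipsoids: if $\Omega=T(B)$ with $T=T^\dagger>0$ and $\det T=1$, then $\nabla\psi=T^{-1}$, $A\equiv T$, $\Omega^\circ=T^{-1}(B)$, and a direct computation gives $I(\Omega^\circ)=\frac{I(B)}{d}\lVert T^{-1}\rVert_{HS}^2$ and $V(\Omega^\circ)=V(B)$, so (using that $\int_B M$ is isotropic over complete shells) both sides collapse to $\frac1d\lVert T^{-1}\rVert_{HS}^2\int_B\tr M$, recovering \autoref{higherregularDN}.

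The main obstacle is proving this last inequality for \emph{non-ellipsoidal} $\Omega$, and it is the reason the statement is only conjectural. In the linear case the constancy of $A$ reduces everything to the Hilbert--Schmidt identity of a single tight frame, exactly as in \autoref{higherdimDN}; for a general convex body $A(y)=D^2\psi^\ast(y)$ varies from point to point and there is no symmetry group to average over, so the pointwise Hessian data of the transport potential must be integrated out sharply into the single second moment $I(\Omega^\circ)$ of the dual body. This appears to demand a new ``transport--polarity'' inequality of Blaschke--Santal\'o type, coupling the Monge--Amp\`ere constraint $\det A\equiv1$ and the convexity of $\psi^\ast$ (which encode $\nabla\psi^\ast(B)=\Omega$) to the radial geometry of $\Omega^\circ$, with equality precisely for ellipsoids. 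A secondary difficulty is the anisotropic correction $b(|y|)\,\widehat y\otimes\widehat y$ in $M$ for values of $n$ that cut a degenerate shell, where the clean reduction to $\lVert A^{-1}\rVert_{HS}^2$ fails and one must either restrict to complete shells and interpolate in $n$, or control $\widehat y^\dagger A^{-2}\widehat y$ directly. I expect the transport--polarity inequality to be the crux on which the whole argument stands or falls.
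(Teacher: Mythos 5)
The statement you are addressing is not proved in the paper at all: it is \autoref{higherDNconj}, an open conjecture, and the paper's only supporting evidence is (i) boundedness of the functional (via $\mu_j \leq \lambda_j$, John's ellipsoid, and \autoref{higherregularDN}) and (ii) consistency, through \autoref{momentratio}, with Conjecture~4.2 of \cite{LS11b} for ellipses, parallelograms and triangles. So there is no paper proof to compare against, and your proposal must stand on its own as a proof. It does not: the decisive step, your ``transport--polarity'' inequality
\[
\int_B \tr\big(A^{-2}M\big)\,dy \;\leq\; \frac{V(B)^{1+2/d}}{I(B)}\,\frac{I(\Omega^\circ)}{V(\Omega^\circ)^{1+2/d}}\int_B \tr M\,dy ,
\]
is exactly as strong as the conjecture itself and is asserted, not proved. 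You have transplanted the difficulty rather than resolved it: in the ellipsoidal case the Hessian $A$ is a single constant matrix and the inequality collapses to the tight-frame identity of \autoref{higherdimtightframe}, but for general convex $\Omega$ nothing in your argument connects the pointwise Monge--Amp\`ere data $A(y)=D^2\psi^\ast(y)$, $\det A \equiv 1$, to the second moment of the polar dual $\Omega^\circ$. There is no symmetry group to average over --- which is precisely the obstruction that forces the paper to restrict its theorems to linear images of domains with irreducible symmetry group --- and no known affine inequality of Blaschke--Santal\'o type has the required form. You candidly flag this yourself, so the proposal should be read as a (sensible, possibly fruitful) reduction scheme, not a proof.

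Two secondary gaps would also need attention even if the key inequality were granted. First, your reduction of $\int_B \tr(A^{-2}M)\,dy$ to a radial weight times $\lVert A^{-1}\rVert_{HS}^2$ requires $n$ to complete a degenerate shell of the ball's Neumann spectrum; for intermediate $n$ the anisotropic term $b(|y|)\,\widehat y \otimes \widehat y$ survives, and ``interpolating in $n$'' does not obviously preserve a sharp inequality, since eigenvalue sums at intermediate $n$ are not convex combinations of shell-completing values. Second, the trial functions $v_j = u_j \circ \nabla\psi$ must lie in $H^1(\Omega)$ for the Rayleigh--Poincar\'e principle to apply: Caffarelli's regularity theory gives interior smoothness of the Brenier potential here (constant densities, convex target), but up-to-the-boundary control of $D^2\psi$ can fail for general convex $\Omega$, so finiteness of $\int_\Omega |\nabla v_j|^2\,dx$ needs separate justification, e.g.\ by approximating $\Omega$ with smooth strongly convex bodies. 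These are plausibly repairable; the transport--polarity inequality is not a repair --- it is the whole problem.
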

In two dimensions, the Conjecture claims that the product
\begin{equation} \label{neusum1}
(\mu_2 + \dots + \mu_n) A \Big|_\Omega \cdot \left. \frac{A^2}{I} \right|_{\Omega^\circ}
\end{equation}
is maximal for the disk. When $\Omega$ is a centered ellipse, parallelogram, or triangle, this last product equals the simpler-looking quantity
\begin{equation} \label{neusum2}
(\mu_2+\cdots+\mu_n)\frac{A^3}{I} \Big|_\Omega
\end{equation}
by \autoref{momentratio} below. Thus for these special domains in two dimensions, the Conjecture is equivalent to Conjecture~4.2 in \cite{LS11b}. For general convex domains, the quantities \eqref{neusum1} and \eqref{neusum2} are at least comparable above and below (since the ratios $A^2/I$ on $\Omega$ and $\Omega^\circ$ are comparable, using John's ellipsoid once more), but the relation between the two conjectures is unclear to us.

\section{\bf Proofs} \label{higherdimproofs}

The Rayleigh quotients for each boundary condition are defined as follows
\begin{align*}
\text{Dirichlet:} \qquad R[u] &  = \frac{\int_D |\nabla u|^2 \, dx}{\int_D u^2 \, dx} && \text{for\ } u \in H^1_0(D) , \\
\text{Robin:} \qquad R[u] &  = \frac{\int_D |\nabla u|^2 \, dx+ \sigma \int_{\partial D} u^2\, ds}{\int_D u^2 \, dx} && \text{for\ } u \in H^1(D) , \\
\text{Neumann:} \qquad R[u] &  = \frac{\int_D |\nabla u|^2 \, dx}{\int_D u^2 \, dx} && \text{for\ } u \in H^1(D) .
\end{align*}
The \textbf{Rayleigh--Poincar\'{e} Principle} characterizes the sum of the first $n \geq 1$ Dirichlet eigenvalues as:
\begin{align*}
& \lambda_1 + \dots + \lambda_n \\
& = \min \big\{ R[v_1] + \dots + R[v_n] : v_1, \dots,v_n \in H^1_0(D)\text{\ are pairwise orthogonal in $L^2(D)$} \big\} ,
\end{align*}
and similarly for sums of Neumann or Robin eigenvalues, using trial functions in $H^1$ instead of $H^1_0$. (See \cite[p.~98]{B80}.)

\subsection*{Tight frame identities} Our proofs will use a Plancherel-type ``tight frame'' identity. Before stating the identity, we note that the symmetry group of a domain in euclidean space is compact, as one sees by a short argument (using that the symmetry group of the domain equals the symmetry group of the complement of the domain, which is a closed set).

Let $\mathcal{U}$ be the symmetry group of a domain $D$, so that $\mathcal{U}$ is compact. Write $\nu$ for the Haar probability measure on $\mathcal{U}$. If the group is finite then Haar measure is simply the normalized counting measure.
\begin{lemma} \label{higherdimtightframe}
Let $\mathcal{U}$ be the symmetry group of a domain $D$. Then $\mathcal{U}$ is irreducible if and only if for all row vectors $z \in \Rd$ and all matrices $Y$ with $d$ rows one has
\begin{equation} \label{VWeq}
  \int_{\mathcal{U}} |zUY|^2 \, d\nu(U) = \frac{1}{d} |z|^2 \lVert Y\rVert_{HS}^2.
\end{equation}
In case $\mathcal{U}$ is finite, the last formula says
\begin{equation} \label{VWeq1}
  \frac{1}{|\mathcal{U}|} \sum_{U \in \mathcal{U}} |zUY|^2 = \frac{1}{d} |z|^2 \lVert Y\rVert_{HS}^2.
\end{equation}
\end{lemma}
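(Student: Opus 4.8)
The plan is to reduce the vector identity to a statement about a single averaged matrix and then invoke the spectral theorem together with irreducibility. Writing $|zUY|^2 = zUY(zUY)^\dagger = zU(YY^\dagger)U^\dagger z^\dagger$ and moving the integral past the fixed factors $z$ and $z^\dagger$, I would set $A = YY^\dagger$ (a symmetric positive semidefinite $d \times d$ matrix) and define the averaged matrix
\[
M = \int_{\mathcal{U}} U A U^\dagger \, d\nu(U).
\]
Identity \eqref{VWeq} is then equivalent to $zMz^\dagger = \frac{1}{d}\lVert Y\rVert_{HS}^2 \, |z|^2$ holding for all row vectors $z$. Since $M$ is symmetric (being an average of the symmetric matrices $UAU^\dagger$), and a symmetric matrix is determined by its quadratic form, this in turn is equivalent to the matrix statement $M = \frac{1}{d}\lVert Y\rVert_{HS}^2\, I$.

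The key structural observation is that $M$ commutes with every element of $\mathcal{U}$. Indeed, for $V \in \mathcal{U}$ the substitution $U \mapsto VU$ combined with the left-invariance of Haar measure $\nu$ gives $VMV^\dagger = \int_{\mathcal{U}} (VU)A(VU)^\dagger \, d\nu(U) = M$, so $VM = MV$. For the forward direction (irreducibility implies the identity), I would apply the spectral theorem to the symmetric matrix $M$: each of its eigenspaces is invariant under $\mathcal{U}$, because commuting operators preserve eigenspaces ($Mx=\lambda x$ forces $M(Vx)=VMx=\lambda Vx$). Irreducibility forces the only invariant subspaces to be $\{0\}$ and $\Rd$, so $M$ has a single eigenvalue and is a scalar multiple of the identity, $M = cI$. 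Taking traces and using the conjugation-invariance of the trace together with $\nu(\mathcal{U}) = 1$ yields $cd = \tr M = \tr A = \tr(YY^\dagger) = \lVert Y\rVert_{HS}^2$, whence $c = \lVert Y\rVert_{HS}^2/d$, as required.

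For the converse I would argue by contrapositive. If $\mathcal{U}$ is reducible, pick a proper nontrivial invariant subspace $W$ and take $Y$ to be the orthogonal projection $P_W$ onto $W$; then $A = YY^\dagger = P_W$ and $\lVert Y\rVert_{HS}^2 = \tr P_W = \dim W$. Because each $U \in \mathcal{U}$ is orthogonal and preserves $W$ (hence also $W^\perp$), it commutes with $P_W$, so $UP_WU^\dagger = P_W$ and therefore $M = P_W$. The averaged matrix $P_W$ has eigenvalues $0$ and $1$ and so is not the scalar matrix $\frac{\dim W}{d}I$ (whose eigenvalues all lie strictly between $0$ and $1$); thus identity \eqref{VWeq} fails for this choice of $Y$. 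The finite-group formula \eqref{VWeq1} is just the specialization of the above with $\nu$ the normalized counting measure.

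I expect the only genuinely delicate point to be the converse direction: one must exhibit a single choice of $Y$ for which the identity provably breaks, and the projection onto an invariant subspace is the natural candidate precisely because averaging leaves it unchanged rather than scalarizing it. The forward direction is standard once the commutation relation is in hand, the trace normalization being routine.
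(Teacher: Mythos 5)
Your proof is correct, and its forward direction is essentially the paper's own argument: both form the averaged matrix $M=\int_{\mathcal{U}} U YY^\dagger U^\dagger \, d\nu(U)$, show it commutes with every element of $\mathcal{U}$ (you via left-invariance of Haar measure, the paper via ``the group property,'' which is the same computation), invoke irreducibility to conclude $M$ is scalar --- the paper phrases this through the orbit of an eigenvector spanning $\Rd$, you through eigenspaces being invariant subspaces, two formulations of irreducibility that the paper itself records as equivalent --- and then pin down the scalar by the same trace computation. Where you genuinely diverge is the converse. The paper takes $Y$ to be a single nonzero column vector $y$ whose orbit fails to span $\Rd$ and chooses $z$ orthogonal to the span of that orbit, so the left side of \eqref{VWeq} is exactly $0$ while the right side is not. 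You instead take $Y=P_W$, the orthogonal projection onto a proper nontrivial invariant subspace $W$, and observe that $UP_WU^\dagger=P_W$ for every $U\in\mathcal{U}$ (valid, since an orthogonal map preserving $W$ preserves $W^\perp$), so the averaged matrix is $P_W$ itself, which cannot equal the scalar matrix $(\dim W/d)\,\mathrm{Id}$; combined with your earlier remark that a symmetric matrix is determined by its quadratic form (or concretely, testing with $0\neq z\in W^\perp$), this breaks \eqref{VWeq}. Both witnesses are sound: the paper's is marginally more economical, using a rank-one $Y$ and an explicit $z$ that annihilates the left side, while yours makes the cleaner structural point that reducibility produces a non-scalar fixed point of the averaging map.
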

If the matrix $Y$ has only one column and has unit norm, then equation \eqref{VWeq1} reduces to the tight frame property \eqref{tightframeeq} for the system of unit vectors $\{ UY : U \in \mathcal{U} \}$. This case was proved by Vale and Waldron \cite{VW04} (see also \cite{VW05,VW08}). As they remarked, the result is essentially Schur's Lemma, and we give such a proof below.
\begin{proof}
``$\Longrightarrow$''   Note that
\begin{align}
     \int_{\mathcal{U}} |zUY|^2 \, d\nu(U) & = \int_{\mathcal{U}} (zUY)(zUY)^{\dagger} \, d\nu(U) \notag \\
     & =z \left( \int_{\mathcal{U}} UYY^{\dagger}U^{\dagger} \, d\nu(U) \right)z^{\dagger}=zMz^{\dagger} , \label{tightmatrix}
\end{align}
say, where the matrix $M$ is clearly symmetric. Furthermore for any $U \in \mathcal{U}$, we see $MU=UM$ by the group property of $\mathcal{U}$. Let $\alpha$ be an eigenvalue of $M$ with eigenvector $w$. Then
\[
  M(Uw)=U(Mw)=U(\alpha w)=\alpha(Uw).
\]
Hence the entire orbit $\{ Uw : U \in \mathcal{U} \}$ consists of eigenvectors belonging to $\alpha$. The orbit spans all of $\Rd$, by the irreducibility hypothesis, and so $M=\alpha \, \mathrm{Id}$.
Taking the trace yields
\[
\alpha d = \tr M=\int_{\mathcal{U}} \tr(UYY^{\dagger}U^{\dagger}) \, d\nu(U) =\int_{\mathcal{U}} \tr(YY^{\dagger}) \, d\nu(U) = \lVert Y\rVert_{HS}^2.
\]
Solving for $\alpha$ and substituting $M=\alpha \, \mathrm{Id}$ into \eqref{tightmatrix} proves \eqref{VWeq}.

``$\Longleftarrow$'' If $\mathcal{U}$ is reducible then for some vector $y \neq 0$ (that is, for some nonzero matrix $Y$ having $d$ rows and $1$ column) the orbit $\{ Uy : U \in \mathcal{U} \}$ does not span $\Rd$, and so some nonzero vector $z$ is orthogonal to every element of the orbit. The left side of \eqref{VWeq} then equals zero while the right side does not, meaning \eqref{VWeq} fails for this $z$ and $Y$.
\end{proof}

\subsection*{Proof of \autoref{higherdimDN}}
The proof goes like in two dimensions \cite[Theorem~3.1]{LS11b}, except that the tight frame identity is more sophisticated. We provide a complete proof, for the reader's convenience.

We will prove the Dirichlet case. The idea is to obtain trial functions on the domain $T(D)$ by linearly transplanting eigenfunctions of $D$, and then to average with respect to the rotations of $D$. The Neumann proof is identical, except for using Neumann eigenfunctions.

Let $u_1,u_2,u_3,\ldots$ be orthonormal eigenfunctions on $D$ corresponding to the Dirichlet eigenvalues $\lambda_1,\lambda_2,\lambda_3,\ldots$. Consider a symmetry (orthogonal matrix) $U$ that fixes $D$. Define trial functions
\[
v_j = u_j \circ U \circ T^{-1}
\]
on the domain $E=T(D)$.

The functions $v_j$ are pairwise orthogonal, since
\[
\int_E v_j v_k \, dx = \int_D u_j u_k \, dx \cdot |\det TU^{-1}| = 0
\]
if $j \neq k$. Thus by the Rayleigh--Poincar\'{e} principle, we have
\[
\sum_{j=1}^n \lambda_j(E) \leq \sum_{j=1}^n \frac{\int_E |\nabla v_j|^2 \, dx}{\int_E v_j^2 \, dx} .
\]
For each function $v=v_j$ we evaluate the last Rayleigh quotient as
\begin{align*}
\frac{\int_E |\nabla v|^2 \, dx}{\int_E v^2 \, dx}
& = \frac{\int_D |(\nabla u)(x)UT^{-1}|^2 \, dx \cdot |\det TU^{-1}|}{\int_D u^2 \, dx \cdot |\det TU^{-1}|} \\
& = \int_D |(\nabla u) UT^{-1}|^2 \, dx ,
\end{align*}
where the gradient $\nabla u$ is regarded as a row vector and in the last line we used that $u=u_j$ is normalized in $L^2(D)$.

By averaging the preceding equality over all symmetries $U$ in the group ${\mathcal U}$ of symmetries of $D$, we find
\begin{align*}
\sum_{j=1}^n \lambda_j(E)  & \leq \int_{\mathcal{U}} \sum_{j=1}^n \int_D  |(\nabla u_j) U T^{-1}|^2 \, dx \, d\nu(U) \\
& = \sum_{j=1}^n \int_D \Big\{ \frac{1}{d} |\nabla u_j|^2 \lVert T^{-1} \rVert_{HS}^2 \Big\} \, dx \qquad \text{by \autoref{higherdimtightframe}} \\
& = \frac{1}{d} \lVert T^{-1} \rVert_{HS}^2 \sum_{j=1}^n \lambda_j(D) ,
\end{align*}
which proves the inequality in \autoref{higherdimDN}.

Obviously equality holds in the theorem if $T$ is orthogonal, since the eigenvalues of the Laplacian are invariant under orthogonal transformations and the Hilbert--Schmidt norm of an orthogonal matrix equals $\sqrt{d}$. Equality also holds for scalar multiples of orthogonal transformations, by a simple scaling argument.

\subsection*{Geometric interpretation of the Hilbert--Schmidt norm}
To prove \autoref{higherregularDN} we must express the Hilbert--Schmidt norm of $T^{-1}$ in terms of volume and the second moment of mass.
\begin{lemma} \label{higherdimHSnorm}
If $D$ has an irreducible symmetry group and $T$ is an invertible $d \times d$ matrix, then
\[
\frac{1}{d} \lVert T^{-1} \rVert_{HS}^2 = \left.  \frac{V(D)^{1+4/d}}{I(D)} \right/  \frac{V \big( T(D) \big)^{2/d} \, V \big( T^{-\dagger}(D) \big)^{1+2/d}}{I \big( T^{-\dagger}(D) \big)} .
\]
\end{lemma}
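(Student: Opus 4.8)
The plan is to reduce the stated identity to two transformation laws under an invertible linear map $S$: the elementary volume law $V(S(D)) = |\det S|\, V(D)$, and a matching second-moment law. Once both are in hand, the claimed equality becomes bookkeeping in powers of $|\det S|$ and $V(D)$, which I expect to cancel so as to leave exactly $\tfrac{1}{d}\lVert T^{-1}\rVert_{HS}^2$.

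First I would record that irreducibility forces the centroid to the origin. The centroid $\overline{x}$ is fixed by every symmetry $U \in \mathcal{U}$ (change variables by $U$ in the defining integral), and in dimension $d \geq 2$ the only vector fixed by an irreducible group is $0$, since otherwise $\operatorname{span}\{ U\overline{x} : U \in \mathcal{U}\}$ would be a proper nonzero invariant subspace. Hence $I(D) = \int_D |x|^2\,dx$, and because the centroid of $S(D)$ is $S\overline{x}=0$ as well, also $I(S(D)) = \int_{S(D)} |y|^2\,dy = |\det S| \int_D |Sx|^2\,dx$.

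The main step is the second-moment law
\[
I(S(D)) = \frac{|\det S|}{d}\, \lVert S \rVert_{HS}^2 \, I(D),
\]
which I would derive straight from the tight frame identity \eqref{VWeq} of \autoref{higherdimtightframe}. Applying that identity with row vector $z = x^\dagger$ and matrix $Y = S^\dagger$ gives $\int_{\mathcal{U}} |SU^\dagger x|^2\,d\nu(U) = \frac{1}{d}|x|^2 \lVert S\rVert_{HS}^2$ for each $x$ (using $\lVert S^\dagger\rVert_{HS} = \lVert S\rVert_{HS}$); since $\mathcal{U}$ is closed under inverse, equivalently transpose, its elements being orthogonal, and Haar measure is inversion-invariant, this is the same as $\int_{\mathcal{U}} |SUx|^2\,d\nu(U) = \frac{1}{d}|x|^2\lVert S\rVert_{HS}^2$. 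Integrating over $x \in D$ and interchanging the order of integration, the inner integral $\int_D |SUx|^2\,dx$ equals $\int_D |Sy|^2\,dy$ for every $U$, by the substitution $y = Ux$ together with $U(D) = D$; this collapses the group average (as $\nu$ is a probability measure) and yields $\int_D |Sy|^2\,dy = \frac{1}{d}\lVert S\rVert_{HS}^2\, I(D)$, whence the displayed law. Equivalently, one could note that the moment tensor $\int_D x x^\dagger\,dx$ commutes with $\mathcal{U}$ and is therefore $\frac{I(D)}{d}\,\mathrm{Id}$ by the same Schur's Lemma argument used in \autoref{higherdimtightframe}.

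Finally I would substitute $S = T$ and $S = T^{-\dagger}$ into the two laws, using $\det(T^{-\dagger}) = (\det T)^{-1}$ and the transpose-invariance $\lVert T^{-\dagger}\rVert_{HS} = \lVert T^{-1}\rVert_{HS}$. Writing $V = V(D)$ and $I = I(D)$, the denominator of the right-hand side becomes
\[
\frac{(|\det T|\,V)^{2/d}\,(|\det T|^{-1} V)^{1+2/d}}{\tfrac{1}{d}\,|\det T|^{-1} \lVert T^{-1}\rVert_{HS}^2\, I}
= \frac{d\,V^{1+4/d}}{\lVert T^{-1}\rVert_{HS}^2\, I},
\]
since the powers of $|\det T|$ cancel (numerator $|\det T|^{-1}$ against denominator $|\det T|^{-1}$) and the powers of $V$ combine to $V^{1+4/d}$. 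Dividing $V^{1+4/d}/I$ by this collapses the whole right-hand side to $\frac{1}{d}\lVert T^{-1}\rVert_{HS}^2$. The only genuine obstacle is the second-moment law; everything after it is routine algebra, and the law itself is a direct consequence of the already-proven tight frame identity, so I anticipate no real difficulty.
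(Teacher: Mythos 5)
Your proof is correct, and its overall skeleton matches the paper's: centroid at the origin by irreducibility, a second-moment identity carrying the factor $\tfrac{1}{d}\lVert \cdot \rVert_{HS}^2$, then determinant--volume bookkeeping (your algebra, including the cancellation of the powers of $|\det T|$, checks out). The one genuine difference is how you obtain the key moment law $I(S(D)) = \tfrac{1}{d}|\det S|\,\lVert S\rVert_{HS}^2\, I(D)$: you derive it by applying the already-proven tight frame identity \eqref{VWeq} of \autoref{higherdimtightframe} pointwise, then integrating over $D$ and collapsing the group average via Fubini, the $U$-invariance of $D$, and the inversion-invariance of Haar measure. The paper instead re-runs the Schur's Lemma argument on the moment matrix $M(D)=\int_D x x^\dagger\,dx$, concluding $M(D)=\tfrac{1}{d}I(D)\,\mathrm{Id}$ (its equation \eqref{momentidentity}), and then reads off $I(T^{-\dagger}D)=\tr\big(T^{-\dagger}M(D)T^{-1}|\det T^{-1}|\big)$ directly --- this is exactly the alternative you mention in passing at the end of your main step. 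Your route buys economy (no repetition of the Schur argument, everything funneled through \autoref{higherdimtightframe}), at the cost of needing the closure of $\mathcal{U}$ under transposes and the inversion-invariance of Haar measure, both of which you correctly justify; the paper's route buys the explicit matrix identity \eqref{momentidentity} and the intermediate formula \eqref{HSdagger}, which it reuses later in the proof of \autoref{momentratio}.
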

In two dimensions we were able to develop a simpler formula \cite{LS11b} solely in terms of the domain $T(D)$, due to a relation between the Hilbert--Schmidt norms of $T$ and its inverse ($\lVert T \rVert_{HS} = |\det T| \lVert T^{-1} \rVert_{HS}$). No such relation seems to hold in higher dimensions.
\begin{proof}[Proof of \autoref{higherdimHSnorm}]
The centroid of $D$ lies at the origin, as a consequence of the irreducibility of the symmetry group. Hence the centroid of $T^{-\dagger}(D)$ also lies at the origin, by a linear change of variable.

The moment matrix of $D$ is defined to be $M(D) = \int_D x x^\dagger \, dx$, where $x$ is a column vector. We claim $M(D)$ is a scalar multiple of the identity. For let $U$ be a symmetry of $D$.  The invariance of $D$ under $U$ implies that $M(D)=UM(D)U^\dagger$, so that $M$ is a scalar multiple of the identity by arguing like in the proof of \autoref{higherdimtightframe} (that is, Schur's Lemma again).

Since the diagonal entries of $M(D)$ are equal, they must be $1/d$ times the trace of the moment matrix, that is, $1/d$ times the second moment of mass (using here that the centroid of $D$ lies at the origin). Thus
\begin{equation} \label{momentidentity}
M(D) = \frac{1}{d} I(D) \, \text{Id.}
\end{equation}

The moment of inertia of $T^{-\dagger}(D)$ can now be computed as
\begin{align*}
I(T^{-\dagger} D)
& = \tr M(T^{-\dagger} D) \\
& = \tr \big( T^{-\dagger} M(D) T^{-1} |\det T^{-1}| \big) \\
& =\frac{1}{d} I(D) \, \big( \tr T^{-\dagger} T^{-1} \big) |\det T^{-1}| \qquad \text{by \eqref{momentidentity}} \\
& = \frac{1}{d} I(D) \, \lVert T^{-1} \rVert_{HS}^2 \, |\det T^{-1}| .
\end{align*}
Rearranging,
\begin{equation} \label{HSdagger}
\frac{1}{d} \lVert T^{-1} \rVert_{HS}^2 = \frac{1}{|\det T^{-1}|} \frac{I(T^{-\dagger} D)}{I(D)} .
\end{equation}

The lemma now follows easily, using the formula
\[
|\det T^{-1}| = \frac{V(TD)^{2/d} \, V(T^{-\dagger} D)^{1+2/d}}{V(D)^{1+4/d}} .
\]
\end{proof}

Next we divert from our main argument to observe that certain plane domains deviate from roundness to the same extent as their polar duals, when the deviation is measured by the moment of inertia.
\begin{lemma} \label{momentratio}
If $\Omega$ is an ellipse, parallelogram, or triangle, and the centroid of $\Omega$ lies at the origin, then
\[
\left. \frac{A^2}{I} \right|_{\Omega^\circ} = \left. \frac{A^2}{I} \right|_{\Omega} .
\]
\end{lemma}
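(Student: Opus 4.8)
The plan is to write each admissible $\Omega$ as a linear image $\Omega = T(D_0)$ of a \emph{model} domain $D_0$ carrying an irreducible symmetry group, and then to track how the scale-invariant ratio $A^2/I$ transforms under the linear map. For the ellipse I take $D_0$ to be the unit disk, for the parallelogram the square, and for the triangle the equilateral triangle; in each case every centered member of the class arises as such a $T(D_0)$, with $D_0$ centered at the origin and irreducibly symmetric (the groups $O(2)$, $D_4$, $D_3$ respectively). The reason for passing to $D_0$ is that, as recorded in \autoref{open}, the polar dual of each model is a \emph{scaled rotation} of the model itself (disk$\to$disk, square$\to$rotated square, regular polygon$\to$rotated regular polygon). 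Hence $D_0^\circ$ again has centroid at the origin and an irreducible symmetry group, and $A^2/I|_{D_0^\circ} = A^2/I|_{D_0}$, since $A^2/I$ is invariant under scalings and rotations (under $x\mapsto cRx$ one has $A\mapsto c^2A$ and $I\mapsto c^4I$).

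Next I would derive the transformation rule for $A^2/I$ under a linear map applied to any origin-centered domain $\Omega_0$ with irreducible symmetry group. The area scales as $A(T\Omega_0) = |\det T|\,A(\Omega_0)$. For the moment, the moment matrix is scalar, $M(\Omega_0) = \tfrac12 I(\Omega_0)\,\mathrm{Id}$ by \eqref{momentidentity} with $d=2$, so that $M(T\Omega_0) = |\det T|\,T M(\Omega_0) T^\dagger = \tfrac12 |\det T|\, I(\Omega_0)\, T T^\dagger$, and taking traces gives $I(T\Omega_0) = \tfrac12 |\det T|\,\lVert T\rVert_{HS}^2\, I(\Omega_0)$ (using throughout that the centroid stays at the origin, so $I=\tr M$). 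Combining the two yields the single clean rule
\[
\frac{A^2}{I}\bigg|_{T(\Omega_0)} = \frac{2\,|\det T|}{\lVert T\rVert_{HS}^2}\cdot\frac{A(\Omega_0)^2}{I(\Omega_0)} .
\]

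I would then apply this rule twice. On $\Omega = T(D_0)$ it produces the prefactor $2|\det T|/\lVert T\rVert_{HS}^2$ times $A(D_0)^2/I(D_0)$. Since $\Omega^\circ = T^{-\dagger}(D_0^\circ)$, applying the rule with base $D_0^\circ$ and matrix $T^{-\dagger}$ produces the prefactor $2|\det T^{-\dagger}|/\lVert T^{-\dagger}\rVert_{HS}^2$ times $A(D_0^\circ)^2/I(D_0^\circ)$. The two model ratios agree by the first paragraph, so the whole claim reduces to showing the two prefactors agree, i.e.\ that $|\det T^{-\dagger}|/\lVert T^{-\dagger}\rVert_{HS}^2 = |\det T|/\lVert T\rVert_{HS}^2$. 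This is exactly where the two-dimensional identity $\lVert T\rVert_{HS} = |\det T|\,\lVert T^{-1}\rVert_{HS}$ of \cite[Lemma~5.4]{LS11b} enters: together with $|\det T^{-\dagger}| = |\det T|^{-1}$ and the transpose-invariance $\lVert T^{-\dagger}\rVert_{HS} = \lVert T^{-1}\rVert_{HS}$, it gives $\lVert T^{-\dagger}\rVert_{HS}^2 = \lVert T\rVert_{HS}^2/|\det T|^2$, which makes the two prefactors equal and finishes the proof.

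The main obstacle, and the only genuinely two-dimensional ingredient, is this last Hilbert--Schmidt identity: it holds because the singular values $s_1,s_2$ of a $2\times2$ matrix satisfy $s_1^{-2}+s_2^{-2} = (s_1^2+s_2^2)/(s_1 s_2)^2$, an identity with no analogue for more than two singular values (as the text already observes after \autoref{higherdimHSnorm}). The only other point needing care is the claim that each model's polar dual is a scaled rotation of the model with centroid still at the origin — immediate for the disk and square, and for the equilateral triangle following from the fact that the dual of a regular $k$-gon is a rotated regular $k$-gon, hence again equilateral and centered.
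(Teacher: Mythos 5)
Your proposal is correct and follows essentially the same route as the paper's proof: reduce to the model domains (disk, square, equilateral triangle) whose polar duals are scaled rotations of themselves, transform $A^2/I$ under the linear map via the scalar moment matrix (your transformation rule is exactly the content of \eqref{HSdagger}), use $T(D)^\circ = T^{-\dagger}(D^\circ)$, and close the gap with the two-dimensional identity $\lVert T\rVert_{HS} = |\det T|\,\lVert T^{-1}\rVert_{HS}$. The only cosmetic difference is that you rederive the moment transformation rule from scratch where the paper cites \eqref{HSdagger}, and you add a nice justification of the two-dimensional Hilbert--Schmidt identity via singular values.
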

\begin{proof}[Proof of \autoref{momentratio}]
Write $D$ for a disk, square, or equilateral triangle, with centroid at the origin, in dimension $d=2$. Then by the examples in \autoref{open}, we know the polar dual $D^\circ$ has the same shape as $D$, up to rotation and scaling. Hence the scale invariant ratio $A^2/I$ takes the same value for $D^\circ$ as for $D$.

The domain $\Omega$ is a linear image of $D$, meaning $\Omega=T(D)$ for some invertible linear transformation $T$. We compute
\begin{align*}
\frac{A^2}{I} \big( T(D) \big)
& = \frac{2|\det T|}{\lVert T \rVert_{HS}^2} \frac{A(D)^2}{I(D)} \qquad \text{by \eqref{HSdagger} with $T$ replaced by $T^{-\dagger}$} \\
& = \frac{2|\det T|^{-1}}{\lVert T^{-1} \rVert_{HS}^2} \frac{A(D)^2}{I(D)} \\
& \qquad \qquad \text{because $\lVert T^{-1} \rVert_{HS}^2 = \lVert T \rVert_{HS}^2 / |\det T|^2$ in two dimensions} \\
& = \frac{2|\det T^{-1}|}{\lVert T^{-1} \rVert_{HS}^2} \frac{A(D^\circ)^2}{I(D^\circ)} \qquad \text{since $D$ and $D^\circ$ have the same shape,} \\
& = \frac{A^2}{I} \big( T^{-\dagger}(D^\circ) \big)
\end{align*}
by \eqref{HSdagger}, which proves the lemma because $T^{-\dagger}(D^\circ)=T(D)^\circ=\Omega^\circ$.
\end{proof}

\subsection*{Proof of \autoref{higherregularDN}}
For the ball, irreducibility of the symmetry group is obvious since every rotation is a symmetry of the ball. If $D$ is a regular simplex or a hypercube, then the symmetry group is known to be irreducible; for example, see \cite[Chapter 6]{Dav08}, where irreducibility is shown for the symmetry groups of the Platonic solids and higher dimensional regular polytopes.

\autoref{higherregularDN} therefore follows from \autoref{higherdimDN} and \autoref{higherdimHSnorm}.

\subsection*{Proof of \autoref{thmrobin}}
The Robin case in higher dimensions is significantly more involved than in the planar case, because to handle the boundary term in the Robin Rayleigh quotient we must study the Jacobian of $T$ on hyperplanes tangent to the boundary of $D$. (In two dimensions the boundary has only one tangent direction, and the boundary Jacobian is easily understood.)

We may assume the linear transformation $T$ is diagonal with positive entries $t_1,\dots,t_d$, since the general case may be reduced to this situation by the singular value decomposition of $T$ and the invariance of Robin eigenvalues under rotations of the domain.

We will study the action of $T$ on a hyperplane by examining its action on the normal vector, motivated by the fact that the base area of a parallelepiped can be determined from the volume and the altitude. Consider vectors $w_1,w_2, \dots ,w_{d-1}$ and $w$. Define $W=[w_1 \dots w_{d-1} ]$ to be the matrix with $i$th column $w_i$, and let
\[
S[W,w] = \det [w_1 \, \dots \, w_{d-1} \, w] .
\]
Write $\mathbf{e}$ for the column vector whose $i$th component is $e_i$, the $i$th  standard unit column vector. Define
\[
S[W]=S[W,\mathbf{e}] = \det [w_1 \, \dots \, w_{d-1} \, \mathbf{e}] ,
\]
so that $S[W]$ is a vector perpendicular to each vector $w_i$. Notice $|S[W] \cdot w| = |S[W,w]|$ gives the volume of the parallelepiped spanned by vectors $w_1, \dots, w_{d-1},w$. By choosing $w$ to be a unit vector orthogonal to each $w_i$, we deduce that $|S[W]|$ equals the $(d-1)$-dimensional volume of the $(d-1)$-dimensional parallelepiped spanned by $w_1,\dots,w_{d-1}$.

One can check from the definition that
\begin{equation} \label{deteq1}
  S[TW] = S[W,T^{-1}\mathbf{e}] (\det T) .
\end{equation}
Further, for any orthogonal matrix $U$ one has
\begin{equation} \label{deteq2}
  US[W] = S[UW]
\end{equation}
by the geometric interpretation of $S[W]$ as a vector perpendicular to each column of $W$ and with magnitude equalling the $(d-1)$-volume of the parallelepiped spanned by $w_1,\dots,w_{d-1}$.

Now we can prove the theorem. The proof goes like for the Dirichlet and Neumann cases in the proof of \autoref{higherdimDN}, except that for the Robin eigenvalues we must take account also of a boundary integral in the Rayleigh quotient. Following the notation of that proof, the boundary term is
\begin{align*}
\frac{\int_{\partial E} v^2 \, dS(x)}{\int_E v^2 \, dx}
& = \frac{\int_{\partial E} u(UT^{-1}x)^2 \, dS(x)}{\int_E u(UT^{-1}x)^2 \, dx} \\
& = \frac{\int_{\partial D} u(x)^2 \big| S[TU^{-1}W(x)] \big| \, dS(x)}{\int_D u(x)^2 \, dx \cdot |\det T|}
\end{align*}
by a change of variable, where $W(x)=[w_1 \dots w_{d-1}]$ is a matrix whose columns form an orthonormal basis for the tangent space of $D$ at $x$. Hence
\begin{equation} \label{rayleighrobin}
\frac{\int_{\partial E} v^2 \, dS(x)}{\int_E v^2 \, dx}
= \int_{\partial D} u(x)^2 \big| S[U^{-1}W(x),T^{-1}\mathbf{e}] \big| \, dS(x) ,
\end{equation}
by calling on \eqref{deteq1} and using the normalization of $u$ in $L^2(D)$.

The Jacobian factor in \eqref{rayleighrobin} satisfies
\begin{align*}
\big| S[U^{-1}W,T^{-1}\mathbf{e}] \big|^2
& = \sum_{i=1}^d \left( \frac{S[U^{-1}W]_i}{t_i} \right)^{\! 2} \\
& = \sum_{i=1}^d \frac{\big(e_i \cdot U^{-1}S[W]\big)^2}{t_i^2}
\end{align*}
by \eqref{deteq2}. Integrating over all symmetries $U$ and then applying the tight frame property from \autoref{higherdimtightframe} shows that
\begin{align*}
\int_{\mathcal{U}} \big| S[U^{-1}W,T^{-1}\mathbf{e}] \big|^2 \, d\nu(U)
&= \sum_{i=1}^d \frac{1}{t_i^2} \int_{\mathcal{U}} \big(e_i \cdot U^{-1}S[W]\big)^2 \, d\nu(U)\\
& = \sum_{i=1}^d \frac{1}{t_i^2} \, \frac{1}{d} |e_i|^2 \big| S[W] \big|^2 \\
& = \frac{\lVert T^{-1} \rVert_{HS}^2}{d} ,
\end{align*}
since orthonormality of the vectors $w_1,\dots,w_{d-1}$ guarantees that $|S[W]|=1$. Hence by taking the square root and using Cauchy--Schwarz,
\[
\int_{\mathcal{U}} \big| S[U^{-1}W,T^{-1}\mathbf{e}] \big| \, d\nu(U) \leq \frac{\lVert T^{-1} \rVert_{HS}}{\sqrt{d}} .
\]
Applying this last estimate to \eqref{rayleighrobin} gives that
\[
\int_{\mathcal{U}} \frac{\int_{\partial E} v^2 \, dS(x)}{\int_E v^2 \, dx} \, d\nu(U) \leq \frac{\lVert T^{-1} \rVert_{HS}}{\sqrt{d}} \int_{\partial D} u(x)^2  \, dS(x) .
\]

The adaptation of the proof of \autoref{higherdimDN} to the Robin situation can now be completed without difficulty.

\subsection*{Proof of \autoref{corrobin}}
The inequality between quadratic and geometric means implies that
\begin{align*}
\frac{\lVert T^{-1} \rVert_{HS}}{\sqrt{d}}&=\sqrt{\frac{1}{d} \sum_{i=1}^d (\text{singular value of $T^{-1}$})_i^2 } \\
& \geq \sqrt[d]{\prod_{i=1}^d (\text{singular value of $T^{-1}$})_i } = \sqrt[d]{|\det T^{-1}|} = 1,
\end{align*}
by our hypothesis that $|\det T|=1$. By inserting this last inequality into \autoref{thmrobin} and invoking the monotonicity of the Robin Rayleigh quotient (and hence eigenvalues) with respect to the Robin parameter, we find
\[
(\rho_1 + \dots + \rho_n) \big|_{\sigma,T(D)} \leq
\frac{1}{d} \lVert T^{-1} \rVert_{HS}^2 (\rho_1 + \dots + \rho_n) \big|_{\sigma,D} .
\]
Now \autoref{higherdimHSnorm} completes the proof.

\subsection*{Proof of \autoref{flattori}}
Write $v_1,\dots,v_n$ for a collection of $n$ shortest vectors in the cubical lattice $\Zd$ (so that $v_1=0$). Then since $T^{-\dagger}v_1,\dots,T^{-\dagger}v_n$ are distinct vectors belonging to the dual lattice $T^{-\dagger} \Zd$ of $T\Zd$, we have
\[
(\tau_1 + \dots + \tau_n) \big|_{\Rd/T\Zd}
\leq 4\pi^2 \left( |T^{-\dagger}v_1|^2 + \dots + |T^{-\dagger}v_n|^2 \right) .
\]
Let $U$ be any symmetry of the hypercube $[-1/2,1/2]^d$, so that $U$ maps $\Zd$ to itself. Then another collection of $n$ shortest vectors in $\Zd$ is $Uv_1,\dots,Uv_n$. Repeating the above estimate with this new collection, we find
\[
(\tau_1 + \dots + \tau_n) \big|_{\Rd/T\Zd}
\leq 4\pi^2 \left( |T^{-\dagger}Uv_1|^2 + \dots + |T^{-\dagger}Uv_n|^2 \right) .
\]
Averaging this inequality over all symmetries $U$ of the hypercube implies (by the tight frame identity in \autoref{higherdimtightframe}) that
\begin{align*}
(\tau_1 + \dots + \tau_n) \big|_{\Rd/T\Zd}
& \leq 4\pi^2 \left( |v_1|^2 + \dots + |v_n|^2 \right) \frac{\lVert T^{-\dagger} \rVert_{HS}^2}{d} \\
& = (\tau_1 + \dots + \tau_n) \big|_{\Rd/\Zd} \, \frac{\lVert T^{-1} \rVert_{HS}^2}{\lVert \text{Id}^{-1} \rVert_{HS}^2} .
\end{align*}
%


\end{document}